\newtheorem{thm}{Theorem}[section]
\newtheorem{lem}[thm]{Lemma}
\journal{ }
\begin{document}

\begin{frontmatter}

%% Title, authors and addresses

%% use the tnoteref command within \title for footnotes;
%% use the tnotetext command for the associated footnote;
%% use the fnref command within \author or \address for footnotes;
%% use the fntext command for the associated footnote;
%% use the corref command within \author for corresponding author footnotes;
%% use the cortext command for the associated footnote;
%% use the ead command for the email address,
%% and the form \ead[url] for the home page:
%%
%% \title{Title\tnoteref{label1}}
%% \tnotetext[label1]{}
%% \author{Name\corref{cor1}\fnref{label2}}
%% \ead{email address}
%% \ead[url]{home page}
%% \fntext[label2]{}
%% \cortext[cor1]{}
%% \address{Address\fnref{label3}}
%% \fntext[label3]{}

\title{An Efficient Numerical Algorithm for the $L^{2}$ Optimal Transport Problem with
Applications to Image processing}

%% use optional labels to link authors explicitly to addresses:
%% \author[label1,label2]{<author name>}
%% \address[label1]{<address>}
%% \address[label2]{<address>}

\author{Louis-Philippe Saumier}\ead{lsaumier@uvic.ca}
\author{Martial Agueh}\ead{agueh@uvic.ca}
\author{Boualem Khouider}\ead{khouider@uvic.ca}
\address{Department of Mathematics and Statistics, University of Victoria, 
PO BOX. 3060 STN, Victoria, B.C., V8W 3R4, Canada. }

% \thanks{Mathematics and Statistics department, University of
% Victoria ({\tt }).}
%         \and Martial Agueh\thanks{Ibid. ({\tt agueh@uvic.ca}).}
%         \and Boualem Khouider\thanks{Ibid. ({\tt khouider@uvic.ca}).}}
% 
% \address{}

\begin{abstract}
We present a numerical method to solve the optimal transport problem with a quadratic cost when the source and target measures are periodic probability 
densities. This method is based on a numerical resolution of the corresponding Monge-Amp\`ere equation. We extend the damped Newton algorithm of Loeper and Rapetti \cite{LR} to the more general case of a non uniform density which is relevant to the optimal transport problem, and we show that  our algorithm converges for sufficiently large damping coefficients.  The main idea consists of designing an iterative scheme where the fully nonlinear equation is approximated by a non-constant coefficient linear elliptic  PDE that we solve numerically. We introduce several improvements and some new techniques  for the numerical resolution of the corresponding linear system. Namely, we use a Fast Fourier Transform (FFT) method by Strain \cite{St}, which allows to increase the efficiency of our algorithm against the standard finite difference method. Moreover, we use a fourth order finite difference scheme to approximate the partial derivatives involved in the nonlinear terms of the Newton algorithm, which are evaluated once at each iteration; this leads to a significant improvement of the accuracy of the method, but does not sacrifice its efficiency. Finally, we present some numerical experiments which demonstrate the robustness and efficiency of our method on several examples of image processing, including  an application to multiple sclerosis disease detection.
\end{abstract}

\begin{keyword}
Monge-Amp\`{e}re equation \sep optimal transport \sep numerical solution \sep Newton's method \sep nonlinear PDE \sep image processing.
%% keywords here, in the form: keyword \sep keyword

\MSC[2010] 49M15 \sep 35J96 \sep 65N06 \sep 68U10.
%% MSC codes here, in the form: \MSC code \sep code
%% or \MSC[2008] code \sep code (2000 is the default)

\end{keyword}

\end{frontmatter}

%%
%% Start line numbering here if you want
%%
% \linenumbers

%% main text
\section{Introduction}\label{sec_intro}

The optimal transport problem, also known as the Monge-Kantorovich problem,
originated from a famous engineering problem by Monge \cite{M} for which Kantorovich
produced a more tractable relaxed formulation \cite{Kan}. This problem
deals with the optimal way of allocating resources from one site to another while
keeping the cost of transportation minimal. Formally, if $\mu$ is a probability
measure modelling the distribution of material in the source domain $X\subset \mathbb{R}^d$,
and $\nu$ is another probability measure modelling the structure of the target domain $Y\subset\mathbb{R}^d$, the
Monge-Kantorovich problem consists of finding the optimal transport plan $T$ in 
\begin{equation}\label{Mongeproblem}
\inf_{T:X\rightarrow Y}\Big\{ \int_X c\left(x-T(x)\right)\,\mbox{d}\mu(x); \;T_{\#}\mu=\nu\Big\},
\end{equation}
where $c(x-y)$ denotes the cost of transporting a unit mass of
material from a position $x\in X$ to a location $y\in Y$, and $T_{\#}\mu=\nu$ means
that $\nu(B)=\mu\left(T^{-1}(B)\right)$ for all Borel sets $Y\subset\mathbb{R}^n$, that is,
the quantity of material supplied in a region $B$ of $Y$ coincides with the total
amount of material transported from the region $T^{-1}(B)$ of $X$ via the transport plan $T$.
When the cost function is quadratic, i.e. $c(x-y)=|x-y|^2/2$, the corresponding
optimal transport problem in known as the $L^2$ optimal transport problem. This
particular case has attracted many researchers in the past few decades, and a lot of
interesting theoretical results have been obtained along with several applications 
in science and engineering, such as meteorology, fluid dynamics and mathematical
economics. We refer to the recent monographs of Villani \cite{V, Vi} for an
account on these developments. One of the most important results concerns the form of the
solution to the $L^2$ optimal transport problem. Indeed if both the source and
target measures are absolutely continuous with respect to the Lebesgue measure on
$\mathbb{R}^d$,  $\mbox{d}\mu/\mbox{d}x = f(x)$, $\mbox{d}\nu/\mbox{d}x = g(x)$,  Brenier
\cite{B} showed that the $L^2$ optimal transport problem has a unique invertible
solution $\widetilde{T}$ ($\mu$ a.e.) that is characterized  by the gradient of a convex
function, $\widetilde{T}=\nabla\Psi$. Moreover, if $f$ and $g$ are sufficiently regular (in a
sense to be specified later), it is proved that $\Psi$ is of class $C^2$ and
satisfies the Monge-Amp\`ere equation 
\begin{equation}\label{MongeAmpere}
g(\nabla \Psi(x))\det{(D^{2}\Psi(x))}=f(x),
\end{equation}
(see Delano\"e \cite{De}, Caffarelli \cite{Ca1,Ca2}, Urbas \cite{Ur}). Therefore,
for smooth source and target probability densities $f$ and $g$, a convex solution
$\Psi$ to the Monge-Amp\`ere equation (\ref{MongeAmpere}) provides the optimal
solution $\widetilde{T}=\nabla\Psi$ to the $L^2$ optimal transport problem.

In this paper, we are interested in the numerical resolution of the $L^2$ optimal
transport problem. Concerning this issue, only few numerical methods are available
in the literature, e.g. \cite{BB,CW,HRT}. Even if some of these methods are efficient, they all have 
issues that call for improvement, most of the time regarding their convergence (which is 
not always guaranteed). Therefore, the numerical results they produce are sometimes not
satisfactory. Although this list is not exhaustive, a more elaborate discussion on the advantages and disadvantages of each of these methods is given in the concluding remarks in Section
 \ref{sec_conc}. Our goal in this paper is to present an efficient numerical method which is
 stable and guaranteed to converge (before discretization). For that, contrarily to the previously existing methods, we propose to solve
numerically the Monge-Amp\`ere equation (\ref{MongeAmpere}) for a convex solution
$\Psi$, then producing the solution $\widetilde{T}=\nabla\Psi$ to the $L^2$ optimal transport
problem. The numerical resolution of the Monge-Amp\`ere equation is still a challenge, though some progress has been made recently, e.g. \cite{LR,O}.  In \cite{LR}, Loeper and Rapetti considered the Monge-Amp\`ere equation (\ref{MongeAmpere}) in the particular case where the target density
$g$ is uniform, $g=1$, and used a damped Newton algorithm to solve the equation. They also provided a proof of convergence of their method (before discretization) 
under some appropriate regularity assumptions  on the initial density. However, their assumption on the final density, (i.e. $g=1$), is too restrictive 
and therefore strongly limits the potential applications of their result, from an optimal transport point of view. Here, we extend their method to the general case where the final density $g$ is arbitrary (but sufficiently regular), which is the general context of the optimal transport problem, and we make their algorithm more efficient by presenting several numerical improvements. This is a novelty in our work compared to \cite{LR}.
Specifically, we approximate the fully nonlinear PDE (\ref{MongeAmpere})
by a sequence of linear elliptic PDE's via a damped Newton algorithm. Then we extend the convergence result of \cite{LR} to the more general context where the final density $g$ is arbitrary (under some suitable regularity assumptions). Here several new techniques are introduced due to the difficulties arising from final density $g$ which is no more uniform in our work. Moreover, we present several numerical improvements to the method introduced in \cite{LR}.  More precisely, we solve the linear PDE's approximating (\ref{MongeAmpere}) using two different discretization methods, namely, a standard
second order finite difference implementation, used in \cite{LR},  and a fast Fourier transform (FFT) implementation (see Strain \cite{St}). 
The FFT algorithm provides what appears to be a globally stable $\mathcal{O}(P{\rm
Log}P)$ method. In addition to this FFT speed up compared to \cite{LR}, we also use for both implementations
fourth order centered differences to approximate the first and second order derivatives involved in the nonlinear
right-hand side of the Newton algorithm. To prove the theoretical convergence of the solutions of the
linear elliptic PDE's to the actual convex solution of the Monge-Amp\`ere equation
(\ref{MongeAmpere}), we exploit interior a priori estimates on the classical solutions
to the Monge-Amp\`ere equation. As far as we know, no global estimates that we could use are available
for this equation. We thus restrict ourselves to a periodic setting to take advantage of the local estimates. 
Even with this restriction, our numerical method still gives in practice very good
results when applied to a wide range of examples, periodic or non-periodic (see
Section \ref{sec_exp}).

The paper is organized as follows. In Section \ref{sec_transp} we present the problem together with
 some background results that will be used later. 
In Section \ref{sec_algo}, we introduce the damped Newton algorithm for the Monge-Amp\`ere equation in 
the general case of the $L^{2}$ optimal transport problem and discuss its convergence. 
In Section \ref{sec_disc}, we propose two different ways of
discretizing the algorithm, and then test these implementations on three examples in
Section \ref{sec_exp}. One of these examples is taken from Medical
imaging, namely, the detection of the multiple sclerosis (MS) disease in a brain magnetic resonance imaging (MRI) scan. 
Finally, we conclude with some remarks in Section \ref{sec_conc}.

\section{Problem setting}\label{sec_transp}
In what follows, $d\geq 1$ is an integer, and we denote by $e_i$ the
$i^{th}$-canonical unit vector of $\mathbb{R}^d$. A function
$\zeta:\mathbb{R}^{d}\rightarrow\mathbb{R}$ is said to be $1$-periodic if
$\zeta(x+e_{i})=\zeta(x)$ for all $x\in\mathbb{R}^d$ and $i\in \{1, \cdots, d\}$. Note that
for such a function, its values on the subset $\Omega:=[0,1]^d$ of $\mathbb{R}^d$ are sufficient
to define its entire values on the whole space $\mathbb{R}^d$. Based on this remark, we will
identify in the sequel 1-periodic functions on $\mathbb{R}^d$ with their restrictions on
$\Omega=[0,1]^d$. Now, let $\mu$ and $\nu$ be two probability measures absolutely
continuous with respect to the Lebesgue measure on $\mathbb{R}^d$, and assume that their
respective densities $f$ and $g$ are 1-periodic.  Then $f, g: \Omega \rightarrow
\mathbb{R}$, and the $L^2$ optimal transport problem with these densities reads as
\begin{equation}\label{L2Mongeproblem}
\inf_{T:\Omega\rightarrow \Omega}\bigg\{ \int_\Omega |x-T(x)|^2\,\mbox{d}\mu(x); \;T_{\#}\mu=\nu,\, \mbox{d}\mu(x)=f(x)\mbox{d}x, \,\mbox{d}\nu(x)=g(x)\mbox{d}x\bigg\}
\end{equation}
Moreover,  the unique solution $\widetilde{T}=\nabla\Psi$ to this problem (where $\Psi:
\Omega\rightarrow \Omega$ is convex) satisfies the Monge-Amp\`ere equation
(\ref{MongeAmpere}) on $\Omega$. The regularity and boundary conditions
corresponding to this Monge-Amp\`ere equation are given by the following theorem due to 
Cordero--Erausquin \cite{CE}.

\begin{thm}\label{CE1}
 Assume that $\mu, \nu, f$ and $g$ are defined as above and let $m_{f}, m_{g},$ $M_{f}$
and $M_{g}$ denote the infima and suprema of $f$ and $g$ respectively. Then there exists a convex
function $\Psi: \Omega\rightarrow \Omega$ which pushes $\mu$ forward to $\nu$ (i.e.
$(\nabla \Psi)\#\mu=\nu$) such that $\nabla\Psi$ is additive in the sense that
$\nabla \Psi(x+p)=\nabla \Psi(x)+p$ for almost every $x\in\mathbb{R}^{d}$ and for
all $p\in\mathbb{Z}^{d}$. Moreover, $\nabla\Psi$ is unique and invertible ($\mu$
a.e.), and its inverse $\nabla\Phi$ satisfies $(\nabla \Phi)\#\nu=\mu$.  In
addition if $f$ and $g$ are of class $\mathcal{C}^{\alpha}(\Omega)$ with $\alpha>0$
and if $m_{g},M_{f}>0$, then $\Psi\in\mathcal{C}^{2,\beta}(\Omega)$ for some
$0<\beta<\alpha$ and it is a convex solution of the Monge-Amp\`{e}re equation
(\ref{MongeAmpere}).
 
\end{thm}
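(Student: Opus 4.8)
The plan is to reduce this periodic transport problem to the classical theory of Brenier and McCann and then to invoke the interior regularity theory for the Monge-Amp\`ere equation, with periodicity playing the role normally played by convexity of the target domain. \emph{Existence.} First I would regard $f$ and $g$ as probability densities on the flat torus $\mathbb{T}^d=\mathbb{R}^d/\mathbb{Z}^d$ and solve there the Kantorovich problem for the quadratic cost $|x-y|_{\mathbb{T}^d}^2/2$. Compactness of $\mathbb{T}^d$ provides a $c$-concave Kantorovich potential, and McCann's polar factorization on a compact Riemannian manifold (here the flat torus), together with Brenier's theorem \cite{B,CE}, shows that the optimal plan is induced by a map $T$ which, once lifted to $\mathbb{R}^d$, has the form $T=\nabla\Psi$ for a convex $\Psi$ with $\Psi-|\cdot|^2/2$ being $1$-periodic. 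The periodicity of $\nabla(\Psi-|\cdot|^2/2)$ is exactly the additivity $\nabla\Psi(x+p)=\nabla\Psi(x)+p$ for $p\in\mathbb{Z}^d$; and since $T$ descends to a map of $\mathbb{T}^d$ pushing $f\,dx$ onto $g\,dx$, we obtain $(\nabla\Psi)\#\mu=\nu$.

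\emph{Uniqueness and invertibility.} If $\nabla\Psi_1$ and $\nabla\Psi_2$ are two additive convex maps pushing $\mu$ to $\nu$, additivity makes their common periodic transport cost finite, so both are optimal for the quadratic cost between $\mu$ and $\nu$ on the torus; the uniqueness part of Brenier's theorem then forces $\nabla\Psi_1=\nabla\Psi_2$ $\mu$-a.e. Running the existence step with $\mu$ and $\nu$ interchanged yields an additive convex $\Phi$ with $(\nabla\Phi)\#\nu=\mu$; by uniqueness $\Phi$ must coincide, up to the harmless additive twist, with the Legendre conjugate of $\Psi$, whence $\nabla\Phi\circ\nabla\Psi=\mathrm{id}$ $\mu$-a.e.\ and $\nabla\Psi\circ\nabla\Phi=\mathrm{id}$ $\nu$-a.e., i.e.\ $\nabla\Psi$ is invertible with inverse $\nabla\Phi$.

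\emph{Regularity.} Now assume $f,g\in\mathcal{C}^\alpha(\Omega)$ with $m_g,M_f>0$; since $f$ and $g$ are continuous and positive on the compact set $\Omega$, we have $0<m_g\le g\le M_g<\infty$ and $0<m_f\le f\le M_f<\infty$. By the change-of-variables formula associated with $(\nabla\Psi)\#\mu=\nu$, the convex function $\Psi$ is an Alexandrov (equivalently Brenier) solution of $\det D^2\Psi=f/(g\circ\nabla\Psi)$, whose right-hand side is bounded between two positive constants. Caffarelli's interior regularity theory for the Monge-Amp\`ere equation then gives $\Psi\in\mathcal{C}^{1,\gamma}_{\mathrm{loc}}(\mathbb{R}^d)$ for some $\gamma\in(0,1)$, and, the data being H\"older continuous, his Schauder-type estimates upgrade this to $\Psi\in\mathcal{C}^{2,\beta}_{\mathrm{loc}}(\mathbb{R}^d)$ for some $0<\beta<\alpha$ (see Caffarelli \cite{Ca1,Ca2}, Delano\"e \cite{De} and Urbas \cite{Ur}). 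Since $\Psi-|\cdot|^2/2$ is $1$-periodic, these interior estimates on a fixed neighbourhood of $\Omega$, say on $(-1,2)^d$, are uniform, so $\Psi\in\mathcal{C}^{2,\beta}(\Omega)$; in particular $\nabla\Psi\in\mathcal{C}^{1,\beta}$ and $\Psi$ solves the Monge-Amp\`ere equation (\ref{MongeAmpere}) in the classical sense on $\Omega$.

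I expect the regularity step to be the main obstacle. The two delicate points there are: (i) checking carefully that $\Psi$ is a genuine Alexandrov solution with a two-sided strictly positive bound on its right-hand side --- this is where the non-degeneracy of the densities is essential --- and (ii) applying Caffarelli's regularity theory in the absence of a convex target domain, where the periodic setting is precisely what removes the boundary obstruction so that the interior estimates become global on $\Omega$. The existence and uniqueness/invertibility parts are, by contrast, essentially bookkeeping once the torus is used to encode the additivity constraint.
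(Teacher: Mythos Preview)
The paper does not supply its own proof of this theorem; it is quoted as a result of Cordero--Erausquin \cite{CE}, and the surrounding text only extracts from it the consequences needed later (additivity of $\nabla\Psi$, hence the periodic change of variables $\Psi(x)=|x|^2/2+u(x)$, and the $\mathcal{C}^{2,\beta}$ regularity). So there is no in-paper argument to compare against beyond this attribution.

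Your outline is essentially the strategy of \cite{CE}: lift the problem to the flat torus, apply Brenier/McCann to get an optimal map of the form $\nabla\Psi$ with $\Psi-|x|^2/2$ periodic, deduce uniqueness and invertibility from Brenier uniqueness applied in both directions, and then use Caffarelli's interior $\mathcal{C}^{1,\gamma}$ and $\mathcal{C}^{2,\beta}$ theory, with periodicity turning interior estimates into global ones on $\Omega$. The two ``delicate points'' you flag are exactly the right ones. I would only add that before invoking Caffarelli's Schauder-type estimate you should argue explicitly that $\Psi$ is \emph{strictly} convex (equivalently, that its sections are bounded): in the non-periodic theory this is where convexity of the target enters, and in the periodic setting it follows because $\nabla\Psi-\mathrm{id}$ is bounded, which rules out any affine piece in $\Psi$. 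With that step made explicit, your sketch matches the cited source and is correct.
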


Note that since $\nabla\Psi$ is additive, it can be written as $x$ plus the gradient 
of a 1-periodic function. Thus, we assume $\Psi(x)=|x|^2/2 + u(x)$ with $\nabla u(x+p)=\nabla u(x)$
for all $p\in \mathbb{Z}^{d}$, i.e. $u$ is 1-periodic. So by using this change of function, $\Psi(x)=|x|^2/2 +u(x)$, in  the Monge-Amp\`ere equation (\ref{MongeAmpere}), we see that the
corresponding equation in $u$ satisfies a periodic boundary condition on $\Omega$.
This justifies why we introduce this change of function in Section \ref{sec_algo} to
rewrite Eq. (\ref{MongeAmpere}). In fact, the periodic boundary conditions will
allow us to use interior a priori estimates for classical solutions of the
Monge-Amp\`ere equation on the whole domain in order to prove the convergence of our algorithm (see
Section \ref{sect_proof}). We also infer from this theorem that if $f, g \in
C^\alpha(\Omega)$, then $\Psi \in C^{2,\beta}(\Omega)$ is the unique (up to a
constant) convex  solution of  the Monge-Amp\`ere equation (\ref{MongeAmpere}) on
$\Omega$. Finally, classical bootstraping arguments from the theory of elliptic regularity
can be used to prove that if $f,g\in\mathcal{C}^{k,\alpha}(\Omega)$, then $\Psi\in\mathcal{C}^{k+2,\beta}(\Omega)$.

\section{The Damped Newton algorithm}\label{sec_algo} 

\subsection{Derivation of the algorithm}\label{subsec_algo1}

Loeper and Rapetti  presented in \cite{LR} a numerical method based on Newton's
algorithm to solve the equation
$$
 \det(D^{2}\Psi)=f(x)
$$
in a periodic setting. This equation can be associated with the optimal transport problem in
the case where the target measure $\nu$ has a uniform density, i.e. $g=1$. Here, we propose to
extend this algorithm and the underlying analysis to the general case of an arbitrary smooth
1-periodic density $g$. Motivated by the remark made in Section \ref{sec_transp}, 
we follow \cite{LR} and introduce the change of function
$\Psi(x)=| x |^{2}/2+u(x)$ to rewrite the Monge-Amp\`{e}re equation
(\ref{MongeAmpere}) in the equivalent form
\begin{equation}\label{ma2}
 M(u)=g(x+\nabla u(x))\det(\mathcal{I}+D^{2}u(x))=f(x).
\end{equation}
Therefore, we will solve (\ref{ma2}) for a 1-periodic solution $u$ such that
$|x|^2/2 + u$ is convex on $\Omega=[0,1]^{d}$. Since we want to develop an algorithm
based on Newton's method, we first linearize (\ref{ma2}). Indeed, using the
formula for the derivative of the determinant \cite{LR,PP}, we have
$$
 \det{(\mathcal{I}+D^{2}(u+s\theta))}=
  \det{(\mathcal{I}+D^{2}u)}+s\,{\rm Tr}\,({\rm
Adj}(\mathcal{I}+D^{2}u)D^{2}\theta)+\mathcal{O}(s^{2})
$$
where ${\rm Adj}(A)=\det{(A)}\cdot A^{-1}$. Also, from the usual Taylor expansion,
we have
$$
 g(x+\nabla(u+s\theta))=g(x+\nabla u)+s\,\nabla g(x+\nabla u)\cdot \nabla
\theta+\mathcal{O}(s^{2}).
$$
Multiplying the latter two expressions, we obtain that the derivative in direction $\theta$ of the right hand side of equation (\ref{ma2}), denoted $D_{u}M\cdot\theta$, is given by
\begin{equation}
 g(x+\nabla u)\, {\rm Tr}\,({\rm Adj}(\mathcal{I}+D^{2}u)D^{2}\theta)+\det{(\mathcal{I}+D^{2}u)}\,\nabla g(x+\nabla u)\cdot \nabla
\theta\label{linearmaeq}.
\end{equation}
With this linearization at hand, we can now present the damped Newton algorithm that
we will use to solve equation (\ref{ma2}).

\smallskip
 \textbf{Damped Newton algorithm}
\begin{equation}
\left\{        \begin{array}{ll}
        {\rm With }\,u_{0}\, {\rm given,\, loop\, over\, }n\in\mathbb{N}\\
        \rule{0pt}{4ex}
         {\rm Compute\, }f_{n}=g(x+\nabla u_{n})\,\det{(\mathcal{I}+D^2{u_{n}})} \\
        \rule{0pt}{4ex}
         {\rm Solve\, the\, linearized\, Monge\mbox{-} Amp\grave{e}re\,\, equation} \\
        \rule{0pt}{4ex}
        \qquad\qquad\displaystyle D_{u_{n}}M\cdot\theta_{n} = \frac{1}{\tau}(f-f_{n}) \label{linma}\\
        \rule{0pt}{3ex}
        {\rm Update\, the\, solution:\, } u_{n+1}=u_{n}+\theta_{n}
        \end{array}
\right.        
\end{equation}
The factor $1/\tau$ ($\tau\geq 1$) in the algorithm  is used as a step-size parameter
to help preventing the method from diverging by taking a step that goes ``too far''.
As we will see below, the value of $\tau$ is crucial for the proof of convergence of the 
algorithm. Indeed, we will show that if we start with a constant initial guess for the Newton method, 
then there is a $\tau$ such that the method will converge (provided some extra conditions on the densities
are satisfied). Furthermore, by modifying some results presented in \cite{GT}, it is possible to prove that a
second order linear strictly elliptic pde with periodic boundary conditions has a
unique solution up to a constant if its zeroth-order coefficient is $0$. The linearized
Monge-Amp\`{e}re equation at step $n$, which we will
denote by $L_{n}$, falls into this category through the setting of the algorithm. To
fix the value of that constant, we select the solution satisfying
$\int_{\Omega}u\,dx=0$. This is guaranteed by choosing a $\theta_{n}$ which satisfies this 
condition for every $n$.

\subsection{Proof of Convergence}\label{sect_proof}

To prove the theoretical convergence of our algorithm (\ref{linma}), we follow the arguments in \cite{LR}, but we introduce several new key steps to deal with the non-uniform final 
density $g$. In particular, we rely on three
a priori estimates for the solution of the Monge-Amp\`{e}re equation. The first one
is derived by Liu, Trudinger and Wang \cite{LTW} and goes as follows: if 
 $\lambda\leq f/g \leq \Lambda$ for some positive constants $\lambda,\Lambda$, and if
$f\in C^{\alpha}(\Omega)$ for some $\alpha\in(0,1)$, then a convex solution of
(\ref{MongeAmpere}) satisfies 
 \begin{equation}\label{E1}
  \Vert \Psi \Vert_{\mathcal{C}^{2,\alpha}(\Omega_{r_{1}})}\leq
R_{1}\left[1+\frac{1}{\alpha(1-\alpha)}\bigg\Vert
\frac{f(x)}{g(\nabla\Psi(x)))}\bigg\Vert_{\mathcal{C}^{\alpha}(\Omega)}\right]
 \end{equation}
where $\Omega_{r_{1}}=\{x\in\Omega:{\rm dist}(x,\partial\Omega)>r_{1}\}$ and $R_{1}$
is a constant which depends only on $d$, $r_{1}$, $\lambda$, $\Lambda$ and $\Omega$.
The second one, discovered by Caffarelli \cite{V} and expressed by Forzani and
Maldonado \cite{FM},  presents a bound on the H\"{o}lder component of $\nabla\Psi$.
It states that if $f$ and $g$ are as in the previous estimate, there exists some constant $k$
such that
\begin{equation}\label{E2}
 { |\nabla\Psi(z)-\nabla\Psi(y) | \over |z-y|^{{1 \over 1+k}}}\leq R_{y}\left(K\over
m(\psi^{*}_{y},0,R_{y}) \right)^{1 \over 1+k}\left(M(\Psi,y,r_{2})\over r_{2}
\right)^{1\over 1+k}
\end{equation}
for $|z-y|\leq r_{2}$, where $\psi_{y}(x)=\Psi(x+y)-\Psi(y)-\nabla\Psi(y)\cdot x$,
$$
\displaystyle R_{y}=\max\bigg\{1,\left(KM(\Psi,y,r_{2})\over m(\psi^{*}_{y},0,1)
\right)^{k\over 1+k} \bigg\}
$$
and $M(\Psi,y,r_{2}),\,m(\Psi,y,r_{2})$ denote respectively the maximum and minimum of
$\psi_{y}(z-y)$ taken over the points $z$ such that $|z-y|=r_{2}$. Since this
estimate does not hold for all $k$, one might wonder for which values it is actually
valid. In \cite{FM},  it is shown that it holds for
$$
 k=2K(K+1),\quad
K=\frac{2^{3d+2}\,w_{d}\,w_{d-1}\,\Lambda}{d^{-3/2}\,\lambda}\quad
{\rm and }\quad
 w_{m}=\frac{\pi^{m/2}}{\Gamma(m/2+1)}.
$$
Here $w_{m}$ is the volume of the $m$-dimensional unit ball. To give the
reader an idea of these values, a few of them are presented in Table \ref{table2} below.
\renewcommand{\arraystretch}{1.5}
\begin{table}[ht]
\begin{center}
\begin{tabular}{|c|c|c|}
\cline{1-3}
d & $w_{d}$ & Rounded $K$ \\
\cline{1-3}
1 & 2 & $\displaystyle64C$ \\
\cline{1-3}
2 & $\pi$ & $\displaystyle4550C$ \\
\cline{1-3}
3 & $\displaystyle4\pi/3$ & $140039C$ \\
\cline{1-3}
4 & $\displaystyle\pi^{2}/2$ & $2709370C$ \\
\cline{1-3}
\end{tabular}
\caption{Quantities involved in the bounds on $\frac{1}{1+k}$ for $\displaystyle
C=\frac{\Lambda}{\lambda}$}\label{table2} 
\end{center}
\end{table}
\renewcommand{\arraystretch}{1}

\noindent Finally, the third estimate controls the growth of the second derivatives
of $\Psi$ with respect to boundary values, provided $f/g\in C^{2}(\Omega)$ and
$\Psi\in C^{4}(\Omega)$ \cite{GT,TW}:
\begin{equation}\label{E3}
\sup_{\Omega}|D^{2}\Psi|\leq R_{3}\left(1+\sup_{\partial \Omega}|D^{2}\Psi|\right)
\end{equation}
where $R_{3}$ depends only on $\Omega,d,f/g$ and on the $\sup$ of $\Psi+\nabla\Psi$
in $\Omega$. We can now state and prove the theorem on the convergence of
Algorithm (\ref{linma}). Note that the arguments of the proof are similar to \cite{LR}, but the fact that
the target density $g$ is non-uniform here introduces some new difficulties that are worthwhile 
exposing. In addition, the proof provides important information that can be used to gain 
some intuition about the performance of the algorithm in practice. 

\begin{thm}\label{convproof}
 Assume that $\Omega=[0,1]^{d}$ and let $f,g$ be two positive 1-periodic probability
densities bounded away from 0. Assume that the initial guess $u_{0}$ for the Newton
algorithm (\ref{linma}) is constant. Then if $f\in
\mathcal{C}^{2,\alpha}(\Omega)$ and $g\in
\mathcal{C}^{3,\alpha}(\Omega)$  for any $0<\alpha<1$, then there exits $\tau\geq1$ such
that $(u_{n})$ converges in $\mathcal{C}^{4,\beta}(\Omega)$, for any
$0<\beta<\alpha$, to the unique -- up to a constant -- solution $u$ of the
Monge-Amp\`{e}re equation (\ref{ma2}). Moreover, $\tau$ depends only on $\alpha, d$, $\Vert f
\Vert_{\mathcal{C}^{2,\alpha}(\Omega)}, \Vert g
\Vert_{\mathcal{C}^{3,\alpha}(\Omega)}$ and $M_{f}, M_{g}, m_{f},m_{g}$ which are defined as
in theorem \ref{CE1}.
\end{thm}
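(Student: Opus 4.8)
The plan is to adapt the strategy of \cite{LR}, monitoring the residual densities $f_n:=M(u_n)$ rather than the iterates themselves, and showing that for $\tau$ large the $f_n$ converge geometrically to $f$ while the $u_n$ stay in a fixed ball of $\mathcal{C}^{4,\beta}(\Omega)$ on which the a priori estimates (\ref{E1})--(\ref{E3}) apply uniformly. The engine is a Taylor expansion of $M$ about $u_n$: writing $M(u_n+\theta)=M(u_n)+D_{u_n}M\cdot\theta+Q_n(\theta)$, where $Q_n(\theta)$ collects all the terms at least quadratic in $(\nabla\theta,D^2\theta)$ — coming from the polynomial expansion of $\det(\mathcal{I}+D^2u_n+D^2\theta)$ in $D^2\theta$ and from the second-order Taylor remainder of $g$ at $x+\nabla u_n$ — and inserting the defining relation $D_{u_n}M\cdot\theta_n=\frac1\tau(f-f_n)$ of the algorithm, one obtains the exact identities
\begin{equation}
f_{n+1}-f=\Big(1-\frac1\tau\Big)(f_n-f)+Q_n(\theta_n),\qquad f_{n+1}=\Big(1-\frac1\tau\Big)f_n+\frac1\tau f+Q_n(\theta_n).
\end{equation}
The first is the source of the contraction; the second shows $f_{n+1}$ to be a convex combination of $f_n$ and $f$ perturbed by the small quantity $Q_n$.

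I would run an induction maintaining, with constants depending only on the data: (i) $\kappa_0\mathcal{I}\le\mathcal{I}+D^2u_n\le K_0\mathcal{I}$, so $|x|^2/2+u_n$ is uniformly convex; (ii) $0<\lambda_0\le f_n\le\Lambda_0$; (iii) $\|f_n\|_{\mathcal{C}^{2,\alpha}(\Omega)}\le B_0$; whence, via (\ref{E1})--(\ref{E3}), periodicity and elliptic bootstrapping, (iv) $\|u_n\|_{\mathcal{C}^{4,\beta}(\Omega)}\le A_0$. For $n=0$ the constant guess gives $\mathcal{I}+D^2u_0=\mathcal{I}$ and $f_0=g$, so (i)--(iv) hold with room to spare. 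Granting them at step $n$, the linearized operator $L_n=D_{u_n}M$ — with principal-part coefficient matrix $g(x+\nabla u_n)\,{\rm Adj}(\mathcal{I}+D^2u_n)$, first-order coefficient $\det(\mathcal{I}+D^2u_n)\,\nabla g(x+\nabla u_n)$, and vanishing zeroth-order coefficient — is uniformly elliptic with $\mathcal{C}^{2,\beta}$ coefficients bounded in terms of $A_0$ and $\|g\|_{\mathcal{C}^{3,\alpha}}$; by the periodic Schauder theory for such operators (the modification of \cite{GT} quoted in Section \ref{subsec_algo1}, with the normalization $\int_\Omega\theta_n\,dx=0$), $\theta_n$ exists, is unique, and satisfies $\|\theta_n\|_{\mathcal{C}^{4,\beta}}\le\frac{C_1}{\tau}\|f-f_n\|_{\mathcal{C}^{2,\alpha}}$ with $C_1$ depending only on the data. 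Provided $\|\theta_n\|_{\mathcal{C}^{4,\beta}}\le1$, the higher powers in $Q_n$ are absorbed and $\|Q_n(\theta_n)\|\le C_2\|\theta_n\|_{\mathcal{C}^{4,\beta}}^2$ in the relevant H\"older norm. Hence, with $\varepsilon_n:=\|f_n-f\|$ and $C_3:=C_1^2C_2$, the first identity yields $\varepsilon_{n+1}\le(1-\frac1\tau)\varepsilon_n+\frac{C_3}{\tau^2}\varepsilon_n^2$.

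I would then fix $\tau\ge1$ large enough so that $\frac{C_1}{\tau}\varepsilon_0\le1$ (hence $\|\theta_n\|_{\mathcal{C}^{4,\beta}}\le1$ at every step) and $\frac{C_3}{\tau^2}\varepsilon_0\le\frac1{2\tau}$; since $\varepsilon_0=\|g-f\|$ and $C_1,C_3,A_0,K_0$ depend only on $d,\alpha,\|f\|_{\mathcal{C}^{2,\alpha}},\|g\|_{\mathcal{C}^{3,\alpha}},m_f,m_g,M_f,M_g$, so does $\tau$. The recursion then gives $\varepsilon_{n+1}\le(1-\frac1{2\tau})\varepsilon_n$, so $\varepsilon_n\le(1-\frac1{2\tau})^n\varepsilon_0\to0$ and $\varepsilon_n\le\varepsilon_0$ for all $n$, which gives (iii) at step $n+1$ with $B_0=\|f\|_{\mathcal{C}^{2,\alpha}}+\varepsilon_0$. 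For (ii), I would iterate the first identity: $f_n-f=(1-\frac1\tau)^n(g-f)+\sum_{k=0}^{n-1}(1-\frac1\tau)^{n-1-k}Q_k$, where $f+(1-\frac1\tau)^n(g-f)$ is a genuine convex combination of $f$ and $g$, hence in $[\min\{m_f,m_g\},\max\{M_f,M_g\}]$, while the sum has sup-norm at most $\tau\sup_k\|Q_k\|_\infty\le\frac{C_3\varepsilon_0^2}{\tau}<\frac12\min\{m_f,m_g\}$ for $\tau$ large; this gives (ii) with $\lambda_0=\frac12\min\{m_f,m_g\}$, $\Lambda_0=2\max\{M_f,M_g\}$. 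Then (iv) is the bootstrapped regularity from (ii)--(iii), and (i) holds because $\det(\mathcal{I}+D^2u_{n+1})=f_{n+1}/g(x+\nabla u_{n+1})$ is bounded in $[\lambda_0/M_g,\Lambda_0/m_g]$ and $\|D^2u_{n+1}\|_\infty\le A_0$ bounds the eigenvalues above (so the smallest is bounded below by the determinant over $(1+A_0)^{d-1}$), positive-definiteness being preserved because each increment $D^2\theta_n$ has sup-norm below the current lower spectral bound, so no eigenvalue of $\mathcal{I}+D^2u_n$ can reach zero. This closes the induction. Finally $\|u_{n+1}-u_n\|_{\mathcal{C}^{4,\beta}}=\|\theta_n\|_{\mathcal{C}^{4,\beta}}\le\frac{C_1}{\tau}(1-\frac1{2\tau})^n\varepsilon_0$ is summable, so $(u_n)$ is Cauchy in $\mathcal{C}^{4,\beta}(\Omega)$; its limit $u$ is convex, satisfies $\int_\Omega u\,dx=\int_\Omega u_0\,dx$, and $M(u)=\lim_n f_n=f$ by continuity of $M$, so by Theorem \ref{CE1} it is the solution of (\ref{ma2}), unique up to a constant.

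The main obstacle — and where the non-uniformity of $g$ genuinely complicates matters compared to \cite{LR} — lies in the terms carrying $g(x+\nabla u_n)$ and $\nabla g(x+\nabla u_n)$. These turn $L_n$ into a true variable-coefficient operator with a nontrivial drift, so one must check that its ellipticity constants and Schauder bounds do not degenerate along the iteration; and they add new higher-order pieces to $Q_n$ — products of the determinant expansion with the Taylor expansion of $g\circ(x+\nabla u_n)$ — which must still be estimated \emph{quadratically} in $\theta_n$, and in a H\"older norm strong enough to feed back into (\ref{E1})--(\ref{E3}), which is exactly what forces the hypothesis $g\in\mathcal{C}^{3,\alpha}$ (one derivative beyond $f$). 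Threading the constants through in the correct order — fixing $\lambda_0,\Lambda_0,B_0,A_0,K_0$ first, then the Schauder and remainder constants $C_1,C_2,C_3$, and only then $\tau$ — so that the induction (i)--(iv) actually closes is the delicate point of the argument.
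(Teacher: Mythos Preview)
Your strategy --- induction on uniform bounds for $f_n$ and $u_n$, the Taylor identity $f_{n+1}-f=(1-\tfrac1\tau)(f_n-f)+Q_n(\theta_n)$ driving a geometric contraction, and the chain (\ref{E1})--(\ref{E3}) closing the constants --- is the paper's, and your diagnosis of the new difficulties from non-constant $g$ is accurate. The organization differs: the paper runs the contraction in $\mathcal C^\alpha$, keeps only a $\mathcal C^{2,\alpha}$ bound on $\Psi_n$ (with (\ref{E2})--(\ref{E3}) used, through a rather delicate convex-analysis argument, to control the $\|\nabla\Psi_n\|_{\mathcal C^{\sqrt\alpha}}$ term hidden on the right of (\ref{E1})), concludes convergence in $\mathcal C^{2,\beta}$ by Arzel\`a--Ascoli, and upgrades the limit to $\mathcal C^{4,\beta}$ at the end; you build the $\mathcal C^{4,\beta}$ bound into the induction and finish by a Cauchy argument. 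Your telescoped proof of (ii) is a clean alternative to the paper's direct bound $f/C_1\le f_n\le C_1 f$.

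There is, however, a genuine gap in your norm bookkeeping. Your Schauder estimate $\|\theta_n\|_{\mathcal C^{4,\beta}}\le\frac{C_1}{\tau}\|f-f_n\|_{\mathcal C^{2,\alpha}}$ together with $C_3=C_1^2C_2$ forces $\varepsilon_n=\|f_n-f\|$ to live in (at least) $\mathcal C^{2,\alpha}$, and the recursion then needs $\|Q_n\|_{\mathcal C^{2,\cdot}}\le C_2\|\theta_n\|^2$. But $Q_n$ contains the piece $\det(\mathcal I+D^2u_n)\,[g(y+z)-g(y)-\nabla g(y)\cdot z]$ with $y=x+\nabla u_n$, $z=\nabla\theta_n$; two $x$-derivatives of the bracket produce, after the natural regrouping, the term $[D^2g(y+z)-D^2g(y)-D^3g(y)z](Dy)^{\otimes 2}$, which under the hypothesis $g\in\mathcal C^{3,\alpha}$ is only $O(|z|^{1+\alpha})$, not $O(|z|^2)$. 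A clean quadratic bound on $Q_n$ in a $\mathcal C^{2,\cdot}$ norm would require $g\in\mathcal C^4$, one derivative more than assumed. The paper avoids this by running the contraction in $\mathcal C^\alpha$ --- there $\|r_n\|_{\mathcal C^\alpha}\le k_{r_n}\|\theta_n\|_{\mathcal C^{2,\alpha}}^2$ needs only $g\in\mathcal C^{2,\alpha}$ --- and invokes the extra regularity $f\in\mathcal C^{2,\alpha}$, $g\in\mathcal C^{3,\alpha}$ solely to secure the \emph{qualitative} membership $\Psi_n\in\mathcal C^4$ required by (\ref{E3}). Your argument closes if you downgrade $\varepsilon_n$ to $\|f_n-f\|_{\mathcal C^\alpha}$, propagate (iii)--(iv) separately via the formula $f_n=M(u_n)$ and the a priori estimates rather than through the recursion, and then combine the uniform $\mathcal C^{4,\beta}$ bound with the $\mathcal C^{2,\beta}$ Cauchy property to get convergence in $\mathcal C^{4,\beta}$.
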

\begin{proof}
First we note that due to the additivity of the transport map, by applying
the change of variable $y=\widetilde{T}_{n}(x)=x+\nabla u_{n}(x)$, we can prove that for all
$n$,
$$
 \int_{\widetilde{T}_{n}(\Omega)}g(y)\,dy=\int_{\Omega}g(\widetilde{T}_{n}(x))\det(D\widetilde{T}_{n}(x))\,dx=\int_{\Omega}f_{n}\,dx=1,
$$
i.e. at every step, we are solving the optimal transport
problem sending $f_{n}$ to $g$. Moreover, unless otherwise stated, we only need to assume that $f\in\mathcal{C}^{\alpha}(\Omega)$ and $g\in\mathcal{C}^{2,\alpha}(\Omega)$. The main steps
of the proof consist in showing by induction that the following claims hold for all $n$ :
\begin{enumerate}
  \item $\mathcal{I}+D^{2}u_{n}$ and ${\rm Adj}(\mathcal{I}+D^{2}u_{n})$ are
$\mathcal{C}^{\alpha}(\Omega)$ smooth, uniformly positive definite (u.p.d.) matrices, where $\mathcal{I}$ denotes the identity matrix.
 \item $\displaystyle\frac{f}{C_{1}}\leq f_{n}\leq C_{1}f$, where $C_{1}$ is independent of $n$.
 \item $\Vert f-f_{n}\Vert_{\mathcal{C}^{\alpha}(\Omega)}\leq C_{2}$, where $C_{2}$
is independent of  $n$.
\end{enumerate}
We say that a matrix $A$ is $\mathcal{C}^{\alpha}(\Omega)$ smooth if all of its
coefficients are in $\mathcal{C}^{\alpha}(\Omega)$; it is u.p.d. (uniformly positive definite) if there exists
 a constant $k>0$ such that $\xi^{T}A\,\xi\geq k|
\xi|^{2}$ for all $\xi\in\Omega$. It is also worth mentioning that the
statement in 1) actually implies that $\Psi_{n}=|x|^{2}/2+u_{n}$ is uniformly convex and
that $L_{n}$ is a strictly elliptic linear operator. 

Note that for $u_{0}$ constant, we have
$f_{0}=g$. Next, let
$$
 C_{1}=\max\left(\frac{M_{f}}{m_{g}},\frac{M_{g}}{m_{f}}\right)\quad {\rm and}\quad
C_{2}=\Vert f \Vert_{\mathcal{C}^{\alpha}(\Omega)} +  \Vert
g\Vert_{\mathcal{C}^{\alpha}(\Omega)}.  
$$
Then, it is easy to see that all the claims 1), 2) and 3) hold for $n=0$. Let's assume they hold for a certain $n\in\mathbb{N}$ and prove them for $n+1$.
 For now, we suppose that the step-size parameter 
could vary with $n$. We shall prove later that we can actually take it to be constant without
affecting any result. Let $\theta_{n}$ be the unique solution of $L_{n}\theta_{n}=(f-f_{n})/\tau$ such that
$\int_{\Omega}\theta_{n}\,dx=0$. According to the results of \cite{GT} (modified for the periodic case)
there exists a constant $k_{\theta_{n}}$ such that
\begin{equation}\label{boundtheta}
 \big\Vert \theta_{n} \big\Vert _{\mathcal{C}^{i,\alpha}({\Omega})}\leq
\frac{k_{\theta_{n}}}{\tau}\big\Vert
f-f_{n}\big\Vert_{\mathcal{C}^{\alpha}({\Omega})}\leq
\frac{k_{\theta_{n}}C_{2}}{\tau},\quad i=1,2.
\end{equation}
Because $u_{n+1}=u_{n}+\theta_{n}$, we deduce that $\mathcal{I}+D^{2}u_{n+1}$ and
then Adj$(\mathcal{I}+D^{2}u_{n+1})$ are $\mathcal{C}^{\alpha}(\Omega)$ smooth.
Now, since $\mathcal{I}+D^{2}u_{n}$ is u.p.d., by assumption we get:
\begin{eqnarray}
 \displaystyle\xi^{T}(\mathcal{I}+D^{2}u_{n+1})\xi &=&
\xi^{T}(\mathcal{I}+D^{2}u_{n})\xi + \xi^{T}(D^{2}\theta_{n})\xi\nonumber\\
&\geq& K_{1} | \xi |^{2}-\frac{k_{\theta_{n}}}{\tau}\big\Vert
f-f_{n}\big\Vert_{\mathcal{C}^{\alpha}({\Omega})}\sum_{i,j=1}^{d}\xi_{i}\xi_{j}\nonumber\\
&\geq& K_{1} | \xi |^{2}-\frac{k_{\theta_{n}}}{2\tau}\big\Vert
f-f_{n}\big\Vert_{\mathcal{C}^{\alpha}({\Omega})}\sum_{i,j=1}^{d}\left(\xi_{i}^{2}+\xi_{j}^{2}\right)\nonumber\\
&=& \left[K_{1}-\frac{k_{\theta_{n}}d}{\tau}\big\Vert
f-f_{n}\big\Vert_{\mathcal{C}^{\alpha}({\Omega})}\right]| \xi |^{2}\nonumber\\
&\geq& K_{2}| \xi |^{2},\nonumber
\end{eqnarray}
for $\tau$ large enough, where $K_{2}$ is a positive constant. Hence $\mathcal{I}+D^{2}u_{n+1}$ is a u.p.d.
matrix. Next, inspired by the Taylor expansions previously shown, we write $f_{n+1}$
in terms of $f_{n}$ as follows:
\begin{eqnarray}
 f_{n+1}&=&g(x+\nabla u_{n+1})\det{(\mathcal{I}+D^{2}u_{n+1})}\nonumber\\
 &=& g(x+\nabla u_{n})\det{(\mathcal{I}+D^{2}u_{n})}+L_{n}\theta_{n}+r_{n}\nonumber\\
 &=&f_{n}+\frac{f-f_{n}}{\tau}+r_{n}\label{eq1}.
 \end{eqnarray}
Now we bound the residual $r_{n}$. It is easy to see that an explicit formula for
$r_{n}$ can be obtained from the second order terms of the Taylor expansion of the Monge-Amp\`{e}re
operator, and it consists of a sum of a bunch of products of at least
two first or second derivatives of $\theta_{n}$ with $g$ and its derivatives
evaluated at $\nabla \Psi_{n}$ and with second derivatives of $\Psi_{n}$. By (\ref{boundtheta}), we know that we can bound the
$\mathcal{C}^{\alpha}(\Omega)$ norm of the second derivatives of $\theta_{n}$ by a
constant times $\Vert f-f_{n}\Vert_{\mathcal{C}^{\alpha}(\Omega)}/\tau$. In
addition, since $g\in\mathcal{C}^{2,\alpha}(\Omega)$, the H\"{o}lder norm of $g$
and its first and second derivatives are all uniformly bounded. We then deduce that
\begin{equation}
\Vert r_{n} \Vert_{\mathcal{C}^{\alpha}(\Omega)}\leq \frac{k_{{\rm
r}_{n}}}{\tau^{2}}\Vert
f-f_{n}\Vert_{\mathcal{C}^{\alpha}(\Omega)}^{2},\label{eq2}
\end{equation}
where $k_{{\rm r}_{n}}$ could potentially depend on the H\"{o}lder norms of the
first and second derivatives of $\Psi_{n}$. Next, by selecting $\tau\geq
2k_{{\rm r}_n}C_{2}$, ($\ref{eq1}$), ($\ref{eq2}$) and 3) imply
\begin{eqnarray}
  \displaystyle \Vert f-f_{n+1} \Vert_{\mathcal{C}^{\alpha}(\Omega)}&\leq&
\left(1-\frac{1}{\tau} \right)\Vert
f-f_{n}\Vert_{\mathcal{C}^{\alpha}(\Omega)}+\frac{k_{{\rm
r}_{n}}}{\tau^{2}}\Vert
f-f_{n}\Vert_{\mathcal{C}^{\alpha}(\Omega)}^{2}\nonumber\\
  &\leq& \Vert
f-f_{n}\Vert_{\mathcal{C}^{\alpha}(\Omega)}\left(1-\frac{1}{\tau}+\frac{k_{{\rm
r}_{n}}C_{2}}{\tau^{2}}\right)\nonumber\\
  &=&\left(1-\frac{1}{2\tau}\right)\Vert
f-f_{n}\Vert_{\mathcal{C}^{\alpha}(\Omega)}.\nonumber
\end{eqnarray}
This shows that bound 3) is preserved for $\tau$ large enough. In addition,
it shows that we can take the step-size $\tau$ such that the sequence of bounds
$K_{2}$ created recursively will converge to a constant strictly greater than $0$.
Let's now verify bound 2). If we take $\tau\geq k_{{\rm
r}_{n}}C_{2}^{2}/[m_{f}(1-\frac{1}{C_{1}})]$, from all the previous results and
hypothesis we get 
\begin{eqnarray}
 \displaystyle f-f_{n+1}&\leq& \frac{\tau-1}{\tau}(f-f_{n})+\frac{k_{{\rm
r}_{n}}}{\tau^{2}}\Vert f-f_{n}
\Vert_{\mathcal{C}^{\alpha}(\Omega)}^{2}\nonumber\\
 &\leq& \frac{\tau-1}{\tau}f\left(1-\frac{1}{C_{1}}\right)+\frac{k_{{\rm
r}_{n}}C_{2}^{2}}{\tau^{2}}.\nonumber\\
 &=&f\left(1-\frac{1}{C_{1}}\right)\nonumber
\end{eqnarray}
from which we deduce that $f/C_{1}\leq f_{n+1}$. Following a similar approach with a
step-size $\tau\geq k_{{\rm r}_{n}}C_{2}^{2}/[m_{f}(C_{1}-1)]$, we obtain the
other part of 2). Then, we go back and finish the proof of the first statement.
Knowing that $\mathcal{I}+D^{2}u_{n+1}$ is u.p.d., we see that
$\det{(\mathcal{I}+D^2u_{n+1})}>0$ and therefore $\mathcal{I}+D^2u_{n+1}$ is
invertible. We can prove that its inverse is also a u.p.d. matrix. Indeed, if $\xi=(\mathcal{I}+D^{2}u_{n+1})y\in\Omega$, we have
\begin{eqnarray} 
\xi^{T}\left(\mathcal{I}+D^{2}u_{n+1} \right)^{-1}\xi &=& \bigg[\left((\mathcal{I}+D^{2}u_{n+1})y\right)^{T}\left(\mathcal{I}+D^{2}u_{n+1}
 \right)^{-1} \nonumber \\ && \qquad\qquad\qquad\qquad\qquad\left((\mathcal{I}+D^{2}u_{n+1})y\right)\bigg] \nonumber \\
=\quad y^{T}\left(\mathcal{I}+D^{2}u_{n+1} \right)y &\geq& K_{3}| y |^{2} \,\,\,=\,\,\, K_{3}| (\mathcal{I}+D^{2}u_{n+1})^{-1}\xi |^{2}.\nonumber
\end{eqnarray}
Using the inequality $| AB | \leq | A |\, | B |$ with $A=\mathcal{I}+D^{2}u_{n+1}$ and
$B=(\mathcal{I}+D^{2}u_{n+1})^{-1}\xi$, we obtain $| \xi | \leq |
\mathcal{I}+D^{2}u_{n+1} |\, | (\mathcal{I}+D^{2}u_{n+1})^{-1}\xi |$. Next,
motivated by the equivalence of norms, we use the bounds we derived previously to
get
\begin{eqnarray*}
 | \mathcal{I}+D^{2}u_{n+1} |&\leq& d\max_{i,\,j}\bigg\{|
(\mathcal{I}+D^{2}u_{n+1})_{ij}| \bigg\}\\
 &\leq& d\bigg(1+\Vert u_{n}\Vert_{\mathcal{C}^{2,\alpha}(\Omega)}+\Vert
\theta_{n}\Vert_{\mathcal{C}^{2,\alpha}(\Omega)}\bigg) \leq K_{4}
\end{eqnarray*}
where $K_{4}$ is a positive constant. This yields the claim:
$$
 \displaystyle \xi^{T}\left(\mathcal{I}+D^{2}u_{n+1} \right)^{-1}\xi \geq
\frac{K_{3}| \xi | ^{2}}{| \mathcal{I}+D^{2}u_{n+1} |^{2}}
 \geq\frac{K_{3}}{K_{4}^{2}}| \xi |^{2}
= K_{5}| \xi |^{2},\,\,\,  K_{5}>0.\nonumber
$$
We now use these statements to show that $L_{n+1}$ is a strictly elliptic operator,
\begin{eqnarray*}
g(x+\nabla u_{n+1})\sum_{i,j=1}^{d}{\rm
Adj}\left(\mathcal{I}+D^{2}u_{n+1}\right)_{ij}\xi_{i}\xi_{j}&\geq& f_{n+1}K_{3}\,| \xi
|^{2}\\
&\geq& \frac{f}{C_{1}}K_{3}\,| \xi |^{2}
 \geq K_{6}\,| \xi |^{2}.
\end{eqnarray*}
Note that by removing $g$ from the previous inequalities, we get that ${\rm
Adj}(\mathcal{I}+D^{2}u_{n+1})$ is a u.p.d. matrix, which completes the proof of 1). Now, we show that the step-size $\tau$ 
can be taken constant, as claimed before.
Indeed, 1) gives $\Psi_{n}\in\mathcal{C}^{2,\alpha}(\Omega)$ by
construction while 2) yields $f_{n}\in\mathcal{C}^{\alpha}(\Omega)$ and
$$
 0<\frac{m_{f}}{C_{1}M_{g}}\leq \frac{f_{n}(x)}{g(x+\nabla u_{n})} \leq
\frac{C_{1}M_{f}}{m_{g}}.
$$
Therefore, all the conditions to the estimate (\ref{E1}) are satisfied at every
step. Using inequalities on H\"{o}lder norms, we find
\begin{equation}
 \bigg\Vert \frac{f_{n}}{g(\nabla \Psi_{n})} \bigg\Vert_{\mathcal{C}^{\alpha}(\Omega)}
 \leq \Vert f_{n} \Vert_{\mathcal{C}^{\alpha}(\Omega)} \bigg\Vert \frac{1}{g}
\bigg\Vert_{\mathcal{C}^{\sqrt\alpha}(\Omega)}\left(1+\Vert \nabla \Psi_{n}
\Vert^{\sqrt\alpha}_{\mathcal{C}^{\sqrt\alpha}(\Omega)}\right)\nonumber
\end{equation}
At this point, the only remaining challenge is to bound $\Vert \nabla \Psi_{n}
\Vert_{\mathcal{C}^{\sqrt\alpha}(\Omega)}$. It can be achieved through the second
estimate (\ref{E2}). Since $\nabla\Psi_{n}$ is the transport map moving
$f_{n}$ to $g$, we can refer to Theorem \ref{CE1} to deduce that $\nabla \Psi_{n}$
is invertible and thus $\nabla \Psi_{n}\in\Omega$, which in turn yields
$\Psi_{n}\in[0,\sqrt{d}]$ when $\Omega=[0,1]^{d}$. Therefore, we see that the
maximum terms $M(\Psi,y,r_{2})$ are going to be uniformly bounded and that the only
problem could come from the minimum terms $m(\psi_{n_{y}}^{*},0,a)$, $a=1$ or
$R_{y}$. Using ideas from convex analysis (presented for example in \cite{HL})), we can 
show that since $\Psi_{n}$ is uniformly convex for every $n$, we have
$m(\psi_{n_{y}}^{*},0,a)=\min\psi_{n_{y}}^{*}(z)$ where the minimum is taken on the
sphere $|z|=a$, $a\geq 1$ (with the periodicity we can increase the size of $\Omega$ to include
it inside and still have a uniform bound on $\Psi_{n}$ and $\nabla\Psi_{n}$).
Furthermore, $\nabla\psi_{n_{y}}^{*}(z)=0$ if and only if $z=0$, 
$\nabla\psi_{n_{y}}^{*}$ is strictly monotone increasing because
$\nabla\psi_{n_{y}}$ is and $\nabla\psi_{n_{y}}^{-1}=\nabla\psi_{n_{y}}^{*}$. We see
that the only possible breakdown happens when $\nabla\psi_{n_{y}}^{*}$ converges to
a function which is zero up to $|z|=a$. This means
$|\nabla\psi_{n_{y}}|=|\nabla\Psi_{n}(x+y)-\nabla\Psi_{n}(y)|\rightarrow\infty$ as
$|x|\rightarrow0$ and $n\rightarrow\infty$, for any $y$. Observe now that if we increase the regularity of the densities to $f\in\mathcal{C}^{2,\alpha}(\Omega)$, $g\in\mathcal{C}^{3,\alpha}(\Omega)$, we get $f_{n}\in\mathcal{C}^{2,\alpha}(\Omega)$ at every step. This tells us that $\theta_{n}\in\mathcal{C}^{4,\alpha}(\Omega)$ (see \cite{GT}) and thus $\Psi_{n}\in\mathcal{C}^{4,\alpha}(\Omega)$. Therefore, we can apply estimate (\ref{E3}) and rule out this potential breakdown case. We obtain that the $\mathcal{C}^{2,\alpha}(\Omega_{r})$ norm of  $\Psi_{n}$ is
uniformly bounded and thus by the additivity of that function in a periodic setting,
the same conclusion holds for its $\mathcal{C}^{2,\alpha}(\Omega)$ norm. Hence, we
deduce that it is also the case for $k_{r_{n}}$ and then $k_{\theta_{n}}$. 
From this, we get that we can select a $\tau\geq 1$ constant such that the three statements 
hold for all $n\in\mathbb{N}$ by induction. Moreover, the sequence $(u_{n})_{n\in\mathbb{N}}$ 
is uniformly bounded in $\mathcal{C}^{2,\alpha}(\Omega)$, thus equicontinuous. By the 
Ascoli-Arzela theorem, it converges uniformly in $\mathcal{C}^{2,\beta}(\Omega)$ for $0<\beta<\alpha$ to
the solution $u$ of (\ref{ma2}), which is unique since we impose
$\int_{\Omega}u\,dx=0$. Finally, due to the fact that the initial and final densities are actually
$\mathcal{C}^{2,\alpha}(\Omega)$, we know that this solution will be in
$\mathcal{C}^{4,\beta}(\Omega)$.
\end{proof}

\subsection{Remarks on the Proof}

This proof by induction provides a lot of precious information concerning the
properties of the iterates created by our method. First, since
$\mathcal{I}+D^{2}u_{n}$ is u.p.d. at every step, we realize that the sequence of functions
$\Psi_{n}$ is actually one of uniformly convex functions. Recall that the
Monge-Amp\`{e}re equation (\ref{MongeAmpere}) is elliptic only when we restrict it to the
space of convex functions. Therefore, the algorithm is extra careful by approximating the 
convex solution of the Monge-Amp\`{e}re equation
by a sequence of uniformly convex functions. In addition, this guarantees that the
linearized equation is strictly elliptic and thus has a unique solution (once we
fix the constant). Furthermore, just like in \cite{LR}, we can obtain estimates on
the speed of convergence of the method. Indeed, assuming that $\tau\geq 2k_{{\rm
res}}C_{2}$, we got
$$
  \displaystyle \Vert f-f_{n+1}
\Vert_{\mathcal{C}^{\alpha}(\Omega)}\leq\left(1-\frac{1}{2\tau}\right)\Vert
f-f_{n}\Vert_{\mathcal{C}^{\alpha}(\Omega)}\nonumber
$$
which tells us that $(f_{n})$ converges to $f$ following a geometric convergence
with a rate of at least $\displaystyle 1-1/2\tau$. When it comes to the step-size
parameter $\tau$, it would be very useful to know a priori which value to select in
order to make the algorithm converge. Such an estimate is unfortunately hard to
acquire since some of the constants used through interior bounds are obtained via
rather indirect arguments. However, we observe from lower bounds on $\tau$ used in
the proof, i.e.
$$
 \tau\geq \frac{k_{{\rm res}}C_{2}^{2}}{m_{f}\left(1-\frac{1}{C_{1}}\right)},\quad
\tau\geq\frac{k_{{\rm res}}C_{2}^{2}}{(C_{1}-1)m_{f}} {\rm\quad or\quad }\tau\geq
2k_{{\rm res}}C_{2},
$$
that the minimum value required on the step-size parameter to achieve convergence
could potentially be large when $m_{f}$ is close to $0$ or when either $\Vert f
\Vert_{\mathcal{C}^{\alpha}(\Omega)}$ or $\Vert
g\Vert_{\mathcal{C}^{\alpha}(\Omega)}$ is large. Through the numerous numerical
experiments we conducted, we realized that $\tau$ seems to behave according to both
conditions. Therefore, knowing a priori that $f$ could get close to $0$, we can react 
accordingly by either increasing the value of the
step-size parameter or by modifying the representation of the densities (which is
possible in some applications). Finally, even if our proof only guarantees
convergence when the update $\theta_{n}$ is the solution of (\ref{linearmaeq}), in
practice we can get good results by replacing it by the solution of $g(x+\nabla u_{n})\, {\rm Tr}\,({\rm
Adj}(\mathcal{I}+D^{2}u_{n})D^{2}\theta_{n})=(f-f_{n})/\tau$, or sometimes by an even simpler equation.

\section{Numerical Discretization}\label{sec_disc}

We present here a two-dimensional implementation of the Newton algorithm
(\ref{linma}). We consider a uniform $N\times N$ grid with a space-step $h=1/N$ where
we identify $x_{i}=0$ with $x_{i}=1$ ($i=1,2$) by the periodicity. It is easy to
see that the most important step for the efficiency of the method is the
resolution of the linearized Monge-Amp\`{e}re equation. Indeed if we take $P$ to
be the number of points on the grid ($P=N^{2}$ in 2D), as every other step can be
done in $\mathcal{O}(P)$ operations, the computational complexity of the whole
method is dictated by the resolution of this linear pde. Therefore, we will
introduce below two methods for solving this equation. For the other
steps, we employ fourth-order accurate centered finite differences for the
discretization of the first and second derivatives of $u_{n}$. We thus improve considerably
the accuracy of the results compared to \cite{LR} where second order differences are used to approximate
these terms, but at the same time we do not decrease the efficiency of the whole algorithm whose complexity is dominated by
the resolution of the linear PDE. If we know $g$ explicitly, then we can compute the compositions $g(x+\nabla u_{n})$
and $\nabla g(x+\nabla u_{n})$ directly. However, it is not always the case, especially when
we deal with discrete data as in the examples of image processing in Section \ref{sec_exp}. In
such circumstances, we have to approximate them. To do so, a popular choice would be to use a linear interpolation but in practice, we find that using only a closest neighbour interpolation gives
good results in most scenarios. Another salient point is that even though in theory $f_{n}$ has a
total mass of 1 at every step, it is not necessarily the case in the
numerical experiments, due to discretization errors. However, we need the
right-hand side of the linearized Monge-Amp\`{e}re equation to integrate to 0 on the
whole domain. To deal with this, we introduce a normalization step right after
computing $f_{n}$ in the implemented algorithm, taking
\begin{equation}\label{translation}
 \tilde{f}_{n}=f_{n}-\frac{1}{N^2}\sum_{i,j=0}^{N-1}f_{n_{i,j}}+1
\end{equation}
instead of $f_{n}$ and thus translating it at every step.

\subsection{A Finite Differences Implementation}
We begin by presenting an implementation of the resolution of the linearized
Monge-Amp\`{e}re equation through finite differences. This choice is motivated by
the fact that it is the method chosen by Loper and Rapetti in \cite{LR} for their
corresponding algorithm. In this case, to reduce the complexity of the code, we only
use centered finite differences of second-order for the derivatives of $\theta_{n}$.
Since the linear pde has a unique solution only up to a constant, the linear system
$Ax=b$ corresponding to its discretization has one free parameter that we need to
fix to turn the matrix into an invertible one. A possible strategy to achieve
this is to create a new system $\hat{A}x=\hat{b}$ by adding the extra equation $\sum
x_{k}=0$, which corresponds to selecting $\theta_{n}$ such that
$\int_{\Omega}\theta_{n}\,dx=0$. Note that this new matrix has full rank, but it is
not square. Then, we take that extra line, add it to all the other lines of
$\hat{A}$ and then delete it to get a square system, $\tilde{A}x=b$. The next lemma
shows  conditions under which the resolution of $\tilde{A}x=b$ will produce a valid
answer to the system $Ax=b$. For the sake of notation, consider the new equation to
be stored in the first line of $\hat{A}$. 
\begin{lem}
 Let $\hat{A}$ and $\tilde{A}$ as defined above, i.e., $\hat{A}$ is a
$(P+1)\times P$ matrix with rank $P=N^2$, and there exist real numbers
$\alpha_1,\alpha_2,\dots,\alpha_{P+1}$ not all zero such that $\alpha_1 L_1 +
\alpha_2 L_2 + \dots + \alpha_{P+1} L_{P+1}=0$ where $L_i$ is the $ith$ line of
$\hat{A}$. If $\alpha_2 + \dots + \alpha_{P+1} \neq \alpha_1$, then $\tilde{A}$ has
rank $P$. 
\end{lem}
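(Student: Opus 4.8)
The plan is to work directly with the linear-algebraic relationship between $\hat{A}$, the matrix $\tilde{A}$ obtained by adding the first line $L_1$ to every other line and then deleting $L_1$, and the original system. Write $\hat{A}$ as having rows $L_1, L_2, \dots, L_{P+1}$ where $L_1$ encodes $\sum x_k = 0$ and $L_2, \dots, L_{P+1}$ are the $P$ rows of the original discretization $A$. Then $\tilde{A}$ is the $P \times P$ matrix whose $i$-th row is $L_{i+1} + L_1$ for $i = 1, \dots, P$. Since $\tilde{A}$ is square, showing it has rank $P$ is the same as showing it is invertible, i.e. that its rows are linearly independent.

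\textbf{The main argument.} Suppose for contradiction that the rows of $\tilde{A}$ are linearly dependent: there exist scalars $\beta_1, \dots, \beta_P$, not all zero, with $\sum_{i=1}^{P} \beta_i (L_{i+1} + L_1) = 0$, that is,
\begin{equation}
\Big(\sum_{i=1}^{P}\beta_i\Big) L_1 + \sum_{i=1}^{P} \beta_i L_{i+1} = 0. \nonumber
\end{equation}
This is a nontrivial linear dependence among the rows $L_1, L_2, \dots, L_{P+1}$ of $\hat{A}$. Since $\hat{A}$ has rank $P$ and $P+1$ rows, its left null space is one-dimensional, spanned by the vector $(\alpha_1, \dots, \alpha_{P+1})$ from the hypothesis. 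Hence there is a scalar $c \neq 0$ (nonzero because the $\beta_i$ are not all zero, so the dependence above is nontrivial) such that the coefficient vector of the displayed dependence is $c$ times $(\alpha_1, \dots, \alpha_{P+1})$; comparing entry by entry gives $\sum_{i=1}^P \beta_i = c\,\alpha_1$ and $\beta_i = c\,\alpha_{i+1}$ for $i = 1, \dots, P$. Summing the second family of equations yields $\sum_{i=1}^P \beta_i = c\,(\alpha_2 + \dots + \alpha_{P+1})$, and combining with the first gives $c\,\alpha_1 = c\,(\alpha_2 + \dots + \alpha_{P+1})$. Since $c \neq 0$, this forces $\alpha_1 = \alpha_2 + \dots + \alpha_{P+1}$, contradicting the hypothesis $\alpha_2 + \dots + \alpha_{P+1} \neq \alpha_1$. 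Therefore no such nontrivial $(\beta_i)$ exists, the rows of $\tilde{A}$ are independent, and $\tilde{A}$ has rank $P$.

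\textbf{Loose ends and the main obstacle.} Two small points need care. First, one must confirm that the left null space of $\hat{A}$ is exactly one-dimensional: this is immediate from rank-nullity, since $\hat{A}$ is $(P+1)\times P$ of rank $P$, so the kernel of $\hat{A}^T$ has dimension $(P+1) - P = 1$. Second, one must verify $c \neq 0$ rigorously: if $c = 0$ then all $\beta_i = c\,\alpha_{i+1} = 0$, contradicting the assumption that the $\beta_i$ are not all zero. Neither of these is a genuine difficulty. The only real subtlety — and the step I would flag as the conceptual heart of the argument — is recognizing that the row operation "add $L_1$ to every other row, then delete $L_1$" preserves full rank precisely when $L_1$ itself is not expressible using the coefficient pattern dictated by the unique dependence; the condition $\alpha_2 + \dots + \alpha_{P+1} \neq \alpha_1$ is exactly the algebraic encoding of that obstruction, and the proof is really just unwinding this. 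I expect the write-up to be short, with the bookkeeping of indices (the shift between $L_{i+1}$ and $L_i$) being the only place to be attentive.
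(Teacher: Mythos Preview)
Your argument is correct. The paper itself does not spell out a proof --- it only remarks that ``the proof is a straightforward use of matrix algebra and therefore not reported here for brevity'' --- and your contradiction argument via the one-dimensional left null space of $\hat{A}$ is exactly the kind of direct matrix-algebra computation the authors have in mind.
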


The proof is a straightforward use of matrix algebra and therefore not reported here
for brevity. This lemma does not hold if condition $\alpha_2 + \dots +
\alpha_{P+1} \neq \alpha_1$ is not satisfied. Take for example a matrix $\hat{A}$ such that
 its second line is equal to the negative of its first line and all
its other lines are linearly independent. Then $\tilde{A}$ has rank $P-1$. Nonetheless,
due to the structure of our problem, this is not going to happen. Unfortunately,
this strategy has the downside of somewhat destroying the sparsity of the matrix.
One way to avoid this would be to equivalently fix only the
value of $\theta$ at a given point and then use the same strategy. This would
preserve most of the sparsity of the matrix.

Next, to actually solve the system $\tilde{A}x=b$, we employ the Biconjugate
Gradient (BICG) iterative method. This choice can be justified by the fact that we
are dealing with a (potentially sparse) matrix which is not symmetric nor positive
definite; the BICG procedure being specifically designed to deal with these
conditions \cite{S}. One feature of this method is that, provided the method does not break
down before, the sequence of approximate solutions it produces is guaranteed to
converge to the solution of the linear system in a maximum of P steps,  which yields
a computational complexity of at worst $\mathcal{O}(P^{2})$. However, as we shall
see later, in practice it can be much smaller than that. In addition, even if the
BICG algorithm is commonly employed with a preconditionner, we did not find the need
to do so while conducting our numerical experiments.

\subsection{A Fourier Transform Implementation}\label{fourier_impl}

One should realize that the first implementation we employed to solve the linearized
Monge-Amp\`{e}re equation might not be the best method. Indeed, there
exist much cheaper ways to solve a linear second-order strictly elliptic equation
with such boundary conditions. The one we are going to explore here is due to Strain
\cite{St} and requires only $\mathcal{O}(P{\rm Log}P)$ operations
through the use of the FFT algorithm. It consists in rewriting the problem as the system
\begin{equation}\label{fouriersystem}
\left\{        \begin{array}{ll}
            L_{n}\overline{L}_{n}^{\,-1}\sigma(x)=h(x)\\
            \rule{0pt}{3ex}
            \quad\,\,\,\,\,\,\overline{L}_{n}\theta(x)=\sigma(x),
        \end{array}
\right.
\end{equation}
where $\overline{L}_{n}$ is the averaged $L_{n}$ in the sense that its coefficients
are the integral over $\Omega$ of the coefficients of $L_{n}$. We then expand
$\sigma$ in Fourier series by taking
$$
 \sigma(x)=\sum_{k\neq0\in\mathbb{Z}^{d}}\widehat{\sigma}(k)e^{2\pi i k\cdot x}\quad
 {\rm and   }\quad\widehat{\sigma}(k)=\int_{\Omega}\sigma(x)e^{-2\pi i k\cdot
x}\,dx ,
$$
where $ i$ representing $\sqrt{-1}$ and $\widehat{\sigma}(k)$ being the usual Fourier
coefficients. Using this expansion in the first part of (\ref{fouriersystem}) yields
the formula 
\begin{eqnarray}
 L\overline{L}^{\,-1}\sigma(x)&=&\sum_{i,j=1}^{d}a_{ij}(x)\sum_{k\neq0\in\mathbb{Z}^{d}}2\pi i
k_{i}2\pi i k_{j}\overline{\rho}(k)\widehat{\sigma}(k)e^{2\pi i k\cdot
x}\nonumber\\
 &&\qquad\qquad\,\,\,\,+\sum_{i=1}^{d}b_{i}(x)\sum_{k\neq0\in\mathbb{Z}^{d}}2\pi i
k_{i}\overline{\rho}(k)\widehat{\sigma}(k)e^{2\pi i k\cdot x}\nonumber\\
 &=&\sum_{i,j=1}^{d}a_{ij}(x)\alpha_{ij}(x)+\sum_{i=1}^{d}b_{i}(x)\beta_{i}(x)\nonumber
\end{eqnarray}
where
\begin{equation}
 \overline{\rho}(k)=\left\{        \begin{array}{ll}
            \displaystyle \frac{1}{ \sum_{i,j=1}^{d}\overline{a}_{ij}2\pi i
k_{i}2\pi i k_{j}+\sum_{i=1}^{d}\overline{b}_{i}2\pi i
k_{i}}\qquad{\rm\, if\, the\, sum\, is \,not\, 0}\\
            0\qquad\qquad\qquad\qquad\qquad\qquad\,\,\qquad\qquad\qquad\quad{\rm otherwise}.
        \end{array}
\right.
\end{equation}
For the discretized problem, knowing the value of $\sigma$, we can compute
$\widehat{\sigma}$ with one application of the FFT algorithm and then compute
$\alpha_{ij}$ and $\beta_{i}$ with $d(d+1)$ applications of the inverse FFT
algorithm to be able to get the value of $L\overline{L}^{\,-1}\sigma(x)$ in
$\mathcal{O}(P{\rm Log}P)$ operations. Therefore, we can use an iterative method to
solve the first equation of system (\ref{fouriersystem}) at a cost of
$\mathcal{O}(P{\rm Log}P)$ operations per iteration. As in Strain \cite{St}, we use the
Generalized Minimal Residual method, or GMRES. Just like BICG, it is an efficient
way of solving a linear system of equations where the matrix is non-symmetric and
non-positive definite \cite{S}. Moreover, GMRES does not use projections on the Krylov
subspace generated by the transposed matrix. This makes it easier to code for the
particular setting we are dealing with since we do not form $A$ directly; we
reference it instead through the result of its product with a given vector $\sigma$.
Strain observed that the number of GMRES
iterations required did not vary with $P$, which yields a global complexity of
$\mathcal{O}(P{\rm Log}P)$. Note that for better performances, we actually employ like
the author the restarted GMRES(m) method. After computing $\sigma(x)$, we need to solve $\overline{L}\theta(x)=\sigma(x)$.
This can be easily achieved since we already know the value of $\overline{L}^{-1}\sigma(x)$.
More specifically, we have
$$
 \theta(x)=\sum_{k\neq0\in\mathbb{Z}^{d}}\overline{\rho}(k)\widehat{\sigma}(k)e^{2\pi i
k\cdot x},
$$
i.e. it requires only one other application of the (inverse) FFT algorithm to obtain
$\theta$. On top of the efficiency of this method, observe that it has other
advantages. It is spectrally accurate, i.e. the error decreases faster than any power
of the grid size as the space-step size goes to $0$. We can also prove that the
convergence rate for the GMRES algorithm is independent of the grid size. For more
details, one should consult the original paper \cite{St}. In the actual
discretization of this method, we truncate the sums in the usual way by varying
$|k|$ from $-N/2$ to $N/2$. We compute the averages of the operator's coefficients
with Simpson's numerical integration formula. Finally, the discrete linear system
still has a solution unique only up to a constant and we can use the same strategy
as in the previous case to fix it.

\section{Numerical Tests}\label{sec_exp}

\subsection{A theoretical example}

\begin{figure}[ht!]
\centering
\subfigure[The $\Vert u-u_{n}\Vert_{l^{2}}$ error for the Fourier transform
implementation on a semilog
plot]{\includegraphics[width=2.49in]{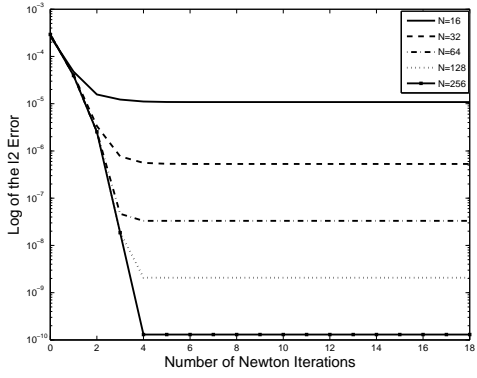}}
\,\,
\subfigure[The $\Vert u-u_{n}\Vert_{l^{2}}$ error for the finite differences
implementation on a semilog
plot]{\includegraphics[width=2.49in]{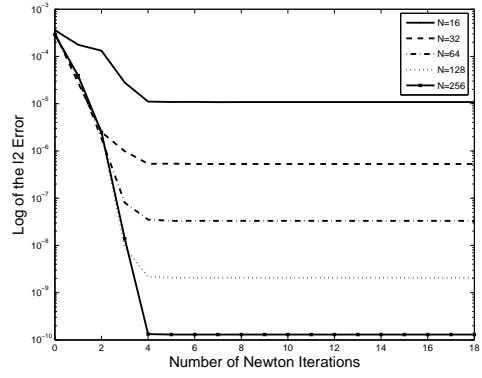}}\\

\subfigure[The $\Vert f-\tilde{f}_{n}\Vert_{l^{2}}$ error for the Fourier transform
implementation on a semilog
plot]{\includegraphics[width=2.49in]{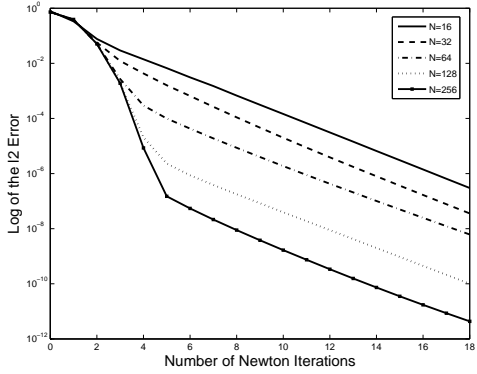}}
\,\,
\subfigure[The $\Vert f-\tilde{f}_{n}\Vert_{l^{2}}$ error for the finite differences
implementation on a semilog
plot]{\includegraphics[width=2.49in]{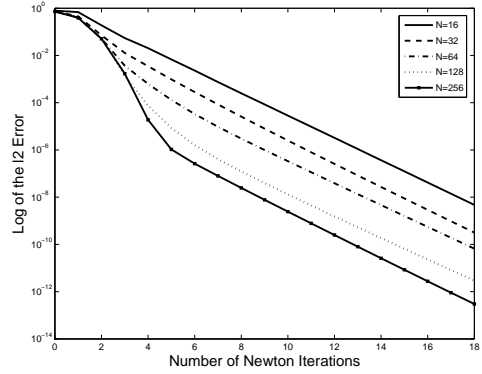}}
\caption{Error behavior for $u_{n}$ and $f_{n}$ for a tolerance of
$10^{-4}$.}\label{smalltol}
\end{figure}

\noindent Our goal here is to observe and compare the behaviour of the two
implementations. Starting with a known $u$ and a known $g$, we compute the
corresponding right-hand side $f$ with (\ref{ma2}), and then we run the algorithm to
obtain $u_{n}$. We consider functions of the form
\begin{eqnarray}
 \quad \displaystyle u(x_{1},x_{2})&=&\frac{1}{k}\cos\left(2\pi\gamma
x_{1}\right)\sin(2\pi\gamma x_{2}),\nonumber\\
 \quad \displaystyle g(x_{1},x_{2})&=&1+\alpha \cos\left(2\pi\rho
x_{1}\right)\cos(2\pi\rho x_{2})\nonumber.
 \end{eqnarray}
For the first implementation we select for the BICG algorithm a tolerance of $10^{-4}$ and a maximum
number of 1000 iterations per Newton step. For the FFT implementation, we take the same
tolerance with a restarting threshold of $m=10$ inner iterations for the GMRES algorithm.
In both cases, a value of $\tau=1$ was enough to achieve convergence. The errors $\Vert u-u_{n} \Vert_{l^{2}}$
and $\Vert f-f_{n} \Vert_{l^{2}}$ are plotted in Figure \ref{smalltol} as functions of the Newton
iterations for both the FFT and finite differences and for various grid sizes ranging from $16\times 16$
to $256\times 256$ grid points. We see that in both cases the error gets smaller as we increase the
grid size. In particular, for this value of $\tau$, after the first $4$ iterations or so, where 
$\Vert u-u_{n}\Vert_{l^{2}}$ settles down very quickly, the convergence of $\Vert f-f_{n}\Vert_{l^{2}}$
follows a linear slope with a convergence rate slightly faster
than a half. The estimated ratio is actually about $0.45$ in the FFT case and is about $0.33$ in the finite differences
case, so the convergence is faster in this latter case for this final stage. Computing the observed order of accuracy from the 
errors between $u$ and $u_{n}$, we get from smaller to bigger  
grid sizes,  $4.3521, 4.0035, 3.9965$ and $3.9990$. This confirms that the fourth-order is consistent with the order
of the finite difference scheme used to compute the right-hand side.

In order to investigate whether we can decrease the computing time without loosing too much
precision on the results, we try to run the experiment again with a tolerance
$10^{-1}$ (see Figure \ref{bigtol}). Due to the looser tolerance employed, the
results are a bit erratic for the finite differences implementation, but overall
still very good. Figure \ref{error_surface_plot} shows the 3d plot of $u-u_{n}$ for $N=128$ in the
Fourier transform case to get an idea of the distribution of the errors. As we can
see, they seem evenly distributed on the whole domain. Figure \ref{error_diff_tau}
depicts what happens when we vary the value of the step-size parameter $\tau$.
The results behave accordingly to our expectations, with a slower convergence
for a bigger $\tau$. Note that for this new tolerance, the computational cost of one
iteration is now much less expensive and the global computing time decreases a lot
in both cases. We can quantify this by looking at Table \ref{table_exp1}. Observe
that the BICG algorithm required less operations than the worst case scenario
$\mathcal{O}(P^{2})$. This being said, we still realize at first glance that the
FFT method is much faster than the finite difference method. The
number of GMRES iterations per Newton iteration stayed nearly constant as we increased the
grid size, which confirms the $\mathcal{O}(P{\rm Log}P)$ computational complexity.  
\begin{figure}[ht!]
\centering
\subfigure[The $\Vert u-u_{n}\Vert_{l^{2}}$ error for the Fourier transform
implementation on a semilog plot]{\includegraphics[width=2.49in]{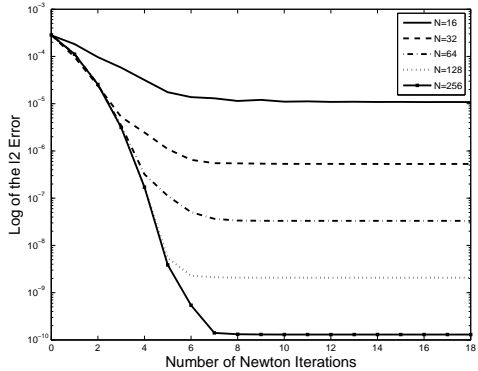}}
\,\,
\subfigure[The $\Vert u-u_{n}\Vert_{l^{2}}$ error for the finite differences
implementation on a semilog
plot]{\includegraphics[width=2.49in]{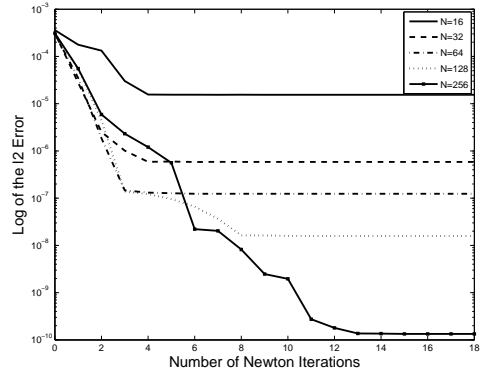}}

\subfigure[Surface plot of $u-u_{6}$ for the Fourier transform implementation in
the $N=128$ case.]{\includegraphics[width=2.49in]{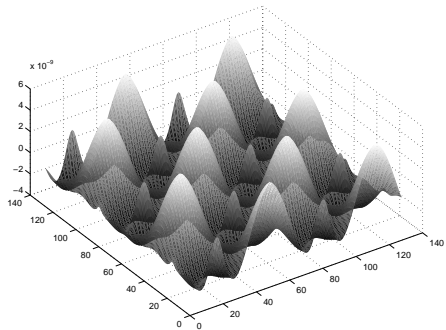}\label{error_surface_plot}}
\,\,
\subfigure[The $\Vert u-u_{n}\Vert_{l^{2}}$ error for the Fourier transform
implementation with different values of $\tau$ in the $N=128$ case on a semilog
plot]{\includegraphics[width=2.49in]{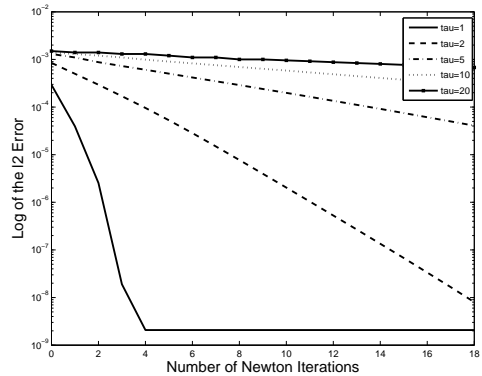}\label{error_diff_tau}}
\caption{Several examples of the results obtained with a tolerance of
$10^{-1}$.}\label{bigtol}
\end{figure}

\begin{table}
\begin{center}\footnotesize
\begin{tabular}{|c|c|c|c|c|}
\cline{1-5}
   &                     & Total           &                    & Total           \\
   & Average number      & computing time  & Average number     & computing time  \\
 N & of                  & for             & of                 & for             \\
   & GMRES iterations    & Fourier         & BICG iterations    & Finite          \\
   &                     & Transforms      &                    & Differences     \\
\cline{1-5}
16 & 5.32 & 1.07 & 14.21 & 2.21  \\
32 & 6.37 & 1.94 & 17.79 & 8.70 \\
64 & 7.32 & 8.06 & 31.11 & 79.17\\
128 & 7.95 & 34.38 & 63.32 & 1221.10 \\
256 & 8.05 & 145.07 & 134.63 & 34639.82\\
\cline{1-5}
\end{tabular}
\caption{Average number of BICG and GMRES iterations per Newton iteration and total
computing time in seconds for the whole experiment (20 iterations) when the
tolerance is set to $10^{-1}$. We used a MATLAB implementation on an Intel Xeon
running at 2.33 GHZ. This is presented for all the different grid sizes.}\label{table_exp1} 
\end{center}
\end{table}
Finally, in order to get an idea of the stability properties of both methods, we can measure
the norm of the inverse of the matrix corresponding to
the discretization of the linearized Monge-Amp\`{e}re operator (see for example \cite{L} for
more information on the stability concept for iterative methods). This can 
be achieved by computing the spectral radius of such matrix. In the finite
differences case, we could get this eigenvalue directly by first obtaining the
inverse, and then computing the eigenvalues of the new matrix. We observed that for
the current experiment, the spectral radius starts at about $10$ for $N=16$ and then
grows almost linearly as we increase the grid size. Hence, the finite difference
method appears unstable. For the FFT case, since we do not possess an
explicit representation of the matrix, we have to use an indirect method to compute
the spectral radius. The one we select is the power method (or power iteration).
For a matrix $A$, it starts with a vector $b_{0}$ and compute the iterates $b_{k+1}=Ab_{k}/\Vert Ab_{k} \Vert$. 
If $A$ has a dominant eigenvalue and if $b_{0}$ has a non-zero component in the direction of the eigenvector associated with this largest eigenvalue, 
then the sequence $(b_{k})$ converges to the eigenvector associated with the spectral radius of $A$ (see \cite{GL}). For the current experiment, we apply this technique to the inverse matrix produced at every Newton step $n$ by the discretization of the linearized 
Monge-Amp\`ere equation via the FFT implementation. More specifically, for a given $n$, we start with a $b_{0}$ randomly generated with components in $[0,1]$.
 Then, using the method presented in Section \ref{fourier_impl}, we compute the product $A_{n}^{-1}b_{k}$ and then the iterate with 
$A_{n}^{-1}b_{k}/\Vert A_{n}^{-1}b_{k}\Vert_{2}$. Repeating this procedure several times with different random vectors $b_{0}$, we observed that the power
 iteration always converges to a number close to $0.03$ after only about $k=5$ iterations, for every $n$ from $0$ to $20$ and for every $N$ from $16$ to $256$.
 Thus, we conclude for this specific example that the spectral radius of the inverse matrix generated at every step of the Newton method is close to $0.03$,
 which of course suggests that the FFT implementation is stable.

\subsection{Application to medical imaging}

Here, we test our algorithm on one of the various applications of optimal transport. In 
the area of image processing, one of the most common tasks performed by practitioners 
is to determine a geometric correspondence between images taken from the same scene in 
order to compare them or to integrate the information they contain to obtain more meaningful 
data. One could think of pictures acquired at different times, with different equipments or
from different viewpoints. This process falls into the category of what is referred
to as image registration. There are two main types of image registration methods:
the rigid ones which involve translations or rotations and the nonrigid ones where
some stretching of the image is required to map it to the other one.  People working on optimal
 transport recently realized that this theory could provide
a good nonrigid image registration technique. Indeed, consider for example two grayscale
images. We could think of them as representing a mass distribution of the amount of
light ``piled up" at a given location. A bright pixel on that image would then
represent a region with more mass whereas a darker pixel would correspond to a
region with less mass. Computing the optimal map between the two images and analyzing
the rate of change of that map could reveal the best way (in terms of minimizing the
transportation distance) of moving the mass from the first density to the second,
precisely showing what is changing on the images and how it is happening. In
\cite{R}, Rehman et al. actually lists several advantages of the optimal
transport method for image registration. However they also stress the fact that it
is computationally expensive and this is one reason why it is important to find
efficient numerical methods to solve this problem.

Our first applied test is in the field of medical imagery. We consider the two 
brain MRI scans presented in Figure \ref{exp2}. These images were taken from the 
BrainWeb simulated brain database at McGill university \cite{Br} and represent a slice of a 
healthy brain and a slice of the same brain where the multiple sclerosis disease is spreading.
This nervous system disease damages the myelin sheets around
the axons of the brain and leaves scars (or sclerosis) visible on an MRI. We chose
MS as a test case since its actual detection process relies on neuro-imaging which tries to 
identify the scars whose presence leaves traces similar to multiple tumours. 
Note that because the scans are dark, their representation in greyscale contains
many values close or equal to 0. For the obvious reason of accuracy, we normalize the densities
and bound them away from 0 by applying the translation in (\ref{translation}) 
on both of them before initiating the algorithm. In addition, we
rescale them so that they are exactly square ($256\times256$ pixels). This is
not required, but helps simplifying the code.
\begin{figure}[ht!]
\centering
\subfigure[Initial density $f$: MRI scan of a normal
brain]{\includegraphics[width=2.32in]{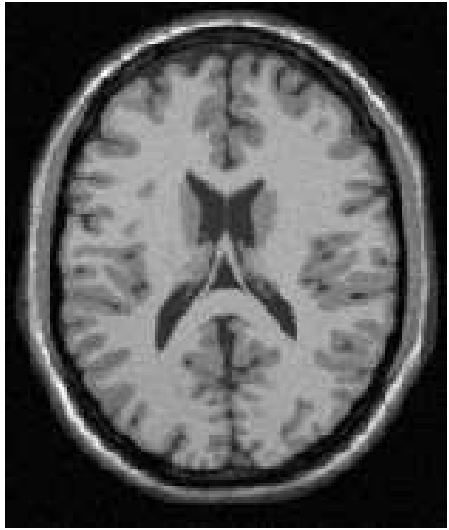}}
\,\quad\quad
\subfigure[Final density $g$: MRI scan of the same brain with Multiple Sclerosis
lesions]{\includegraphics[width=2.32in]{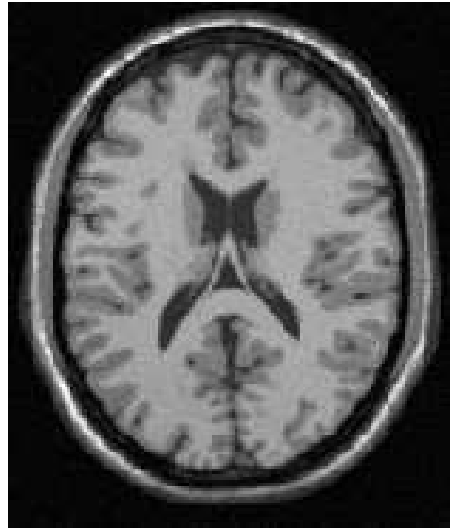}}

\subfigure[Surface plot of ${\rm
div}(u_{4})$]{\raisebox{8ex}{\includegraphics[width=2.59in]{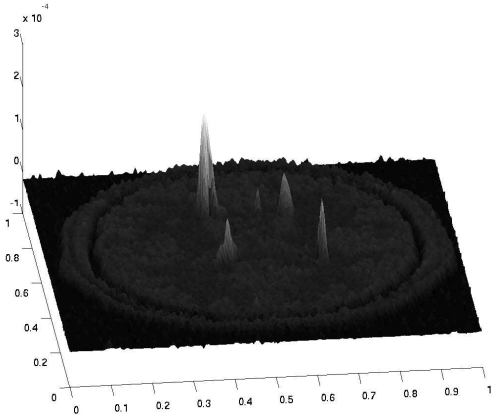}\label{brain_surf}}}
\,\,
\subfigure[Scan of the healthy brain on which was superposed the coloured filtered
contour plot of ${\rm
div}(u_{4})$]{\includegraphics[width=2.32in]{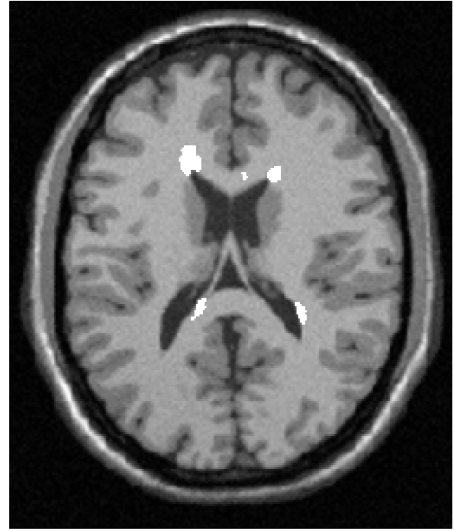}\label{brain_super}}
\caption{The results of the MS detection experiment with the Fourier transform
implementation}\label{exp2}
\end{figure}
On the bottom panels of Figure \ref{exp2}, we show the results reached after only 4 Newton
iterations with a $\tau=1$ and a tolerance of $10^{-2}$ for the Fourier transform
implementation. We note that the corresponding $L^{2}$ norm of the error between $f$ and $f_{n}$ is
reduced to about $0.001$. The 3d plot in Figure \ref{brain_surf} is characterized by very sharp spikes
corresponding to variations in brightness between the two images where the scars are located. To get a better visual understanding of the situation, we also took the graph of the filtered contour plot,
coloured in white the inside of the contour lines corresponding to the affected
regions and we superposed this image to the MRI scan of the healthy brain (Figure \ref{brain_super}). 
The number of GMRES iterations required per Newton iteration was very small and nearly constant
(only 1 outer iteration and about 6 inner ones). Even if our code was not
necessarily optimized in terms of speed, it only took about 30 seconds to compute
these results on an Intel Xeon with 2.33 GHZ of RAM. Moreover, the spectral radius
was still very close to $0.03$ for the inverse discretization matrix, which is very
encouraging. We also need to mention that we don't have access here to an analytical
expression for $f$ or $g$. Therefore, to compute an approximation for $g(x+\nabla
u_{n}(x))$ and $\nabla g(x+\nabla u_{n}(x))$ at every grid point $x=(ih,jh)$, we employ
a closest neighbour approximation and like previously pointed out, the results are
still very good. In addition to that change detection, the optimal transport plan $\widetilde{T}=x+\nabla
u(x)$ actually gives us precise information on the amount of variation from one MRI
scan to the other. Indeed, we can define a metric between probability densities from
the solution to the transport problem; the distance being
$$
 {\rm d}(f,g)=\int_{\Omega}| x-\widetilde{T}(x)|^{2}f(x)\,dx=\int_{\Omega}| \nabla
u(x)|^{2}f(x)\,dx.
$$
This could quantify the magnitude of the change between the two images and thus help
monitor the growth of the disease. In our experiment, we got a value of
$4.14\times10^{-10}$. Note that when we compared it to other ones obtained from
different numerical experiments on MRI scans with presence of MS, these numbers
validated our visual estimates; more scars would produce a bigger number. Recall
finally that even though we implemented it only in 2D, in theory it is valid in any
dimension. Therefore, it could also be applicable on 3D datasets which would be much
more realistic when it comes to analyzing biological phenomena similar to the
ones we treated here.

\subsection{A non-periodic example}  

So far, we selected examples that were well suited for our periodic boundary
conditions. Nonetheless, we claim that our algorithm could produce great performances
on densities which are not necessarily periodic. To demonstrate this, we chose two
famous pictures in the image processing community as initial and final densities, namely Lena and Tiffany. First,
we do a little bit of preprocessing by translating the initial pictures and by
scaling them to exactly $256\times256$ pixels. We point out that we did not apply
any smoothing to any of the images. Then, we select $\tau=2$, tol$=10^{-1}$ and run
20 iterations of the Newton algorithm with the Fourier series implementation.    
  \begin{figure}[ht!]
\centering
\subfigure[$g$]{\includegraphics[width=0.36\textwidth]{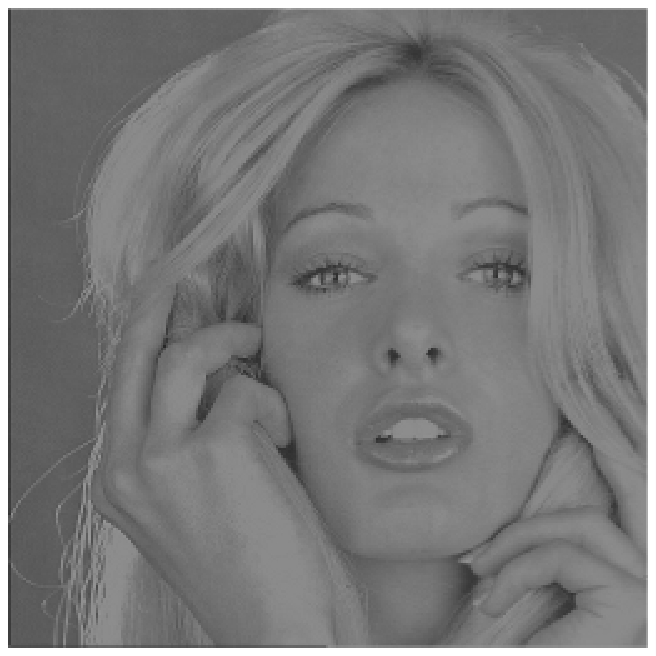}}\quad
\subfigure[$f_{2}$]{\includegraphics[width=0.36\textwidth]{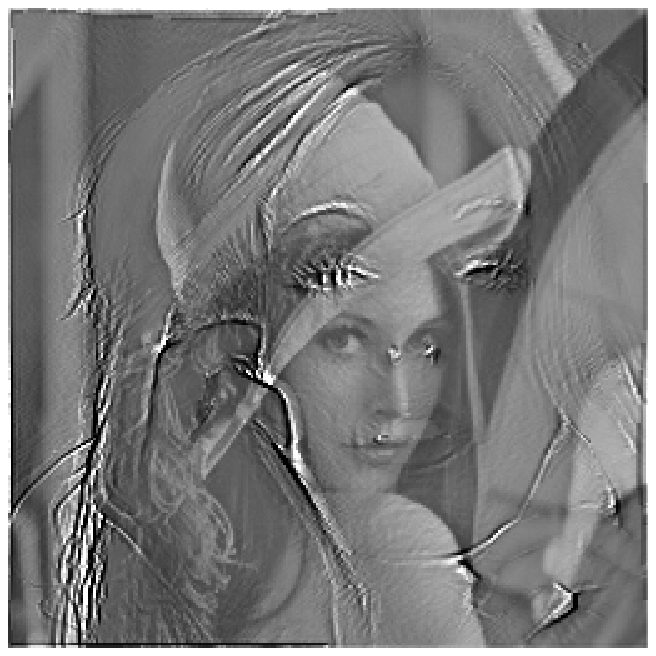}}\\

\subfigure[$f_{5}$]{\includegraphics[width=0.36\textwidth]{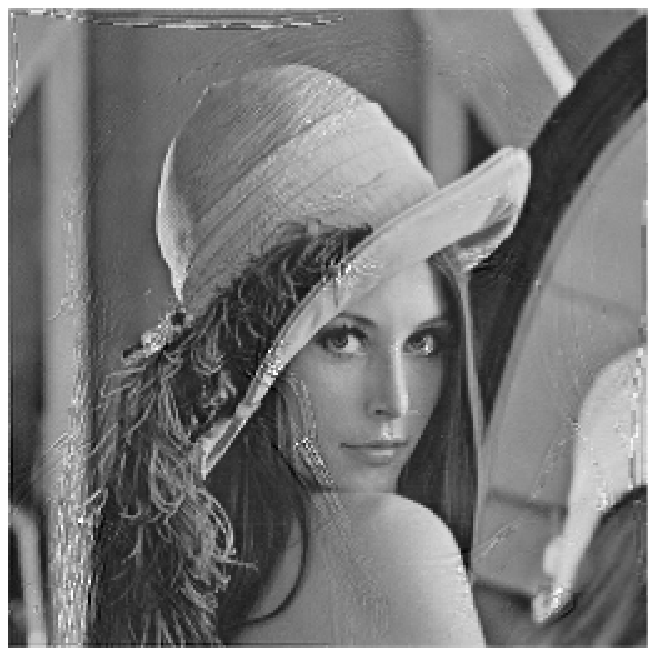}}\quad
\subfigure[$f_{10}$]{\includegraphics[width=0.36\textwidth]{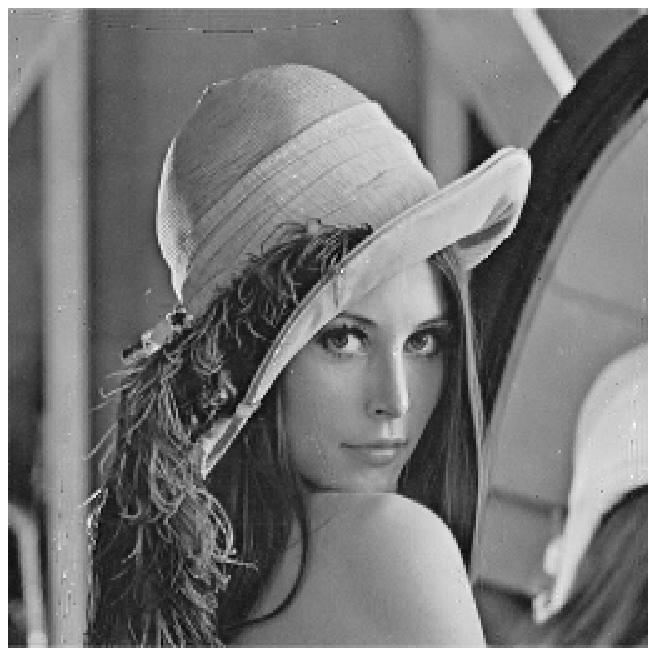}}\\

\subfigure[$f_{20}$]{\includegraphics[width=0.36\textwidth]{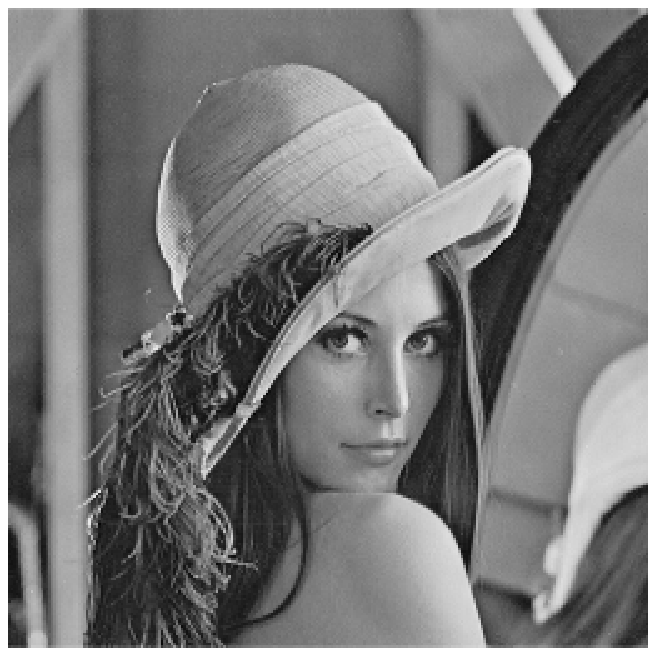}}\quad
\subfigure[$f$]{\includegraphics[width=0.36\textwidth]{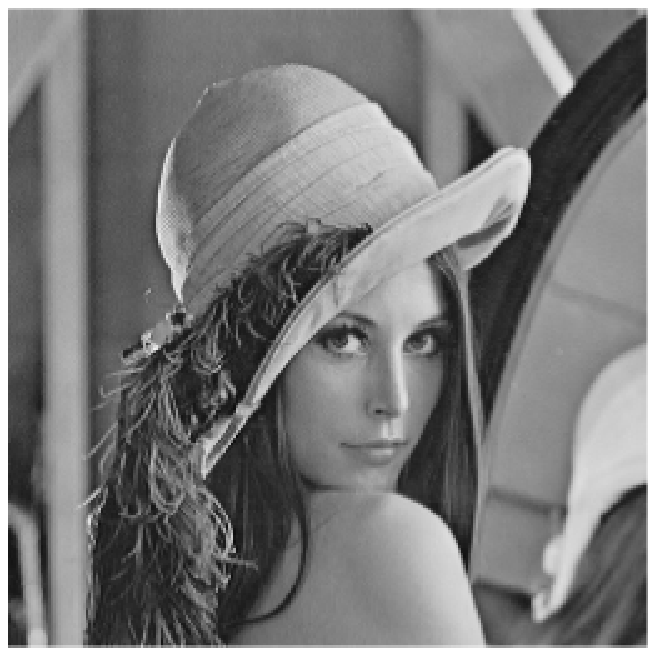}}

\caption{Iterations of the Lena to Tiffany warp for a closest neighbour
interpolation.}\label{lena_tiffany_warp}
\end{figure}  
  The output is presented in Figure \ref{lena_tiffany_warp}. Yet again, the number of
GMRES iterations stayed nearly constant (only about 4 inner iterations) which made
the computing time very small and the spectral radius of the inverse of the
discretization matrix stayed close to $0.03$. One can observe the effect of the
periodic boundary conditions (on $f_{5}$ for example). We see that the periodicity
did not affect the overall outcome. Indeed, $f_{20}$ and $f$ are almost visually
identical. If we use a linear interpolation instead of a closest neighbour
interpolation, we could not visually tell the difference between $f_{20}$ and $f$
since we were able to reduce a lot more the maximum of $f-f_{n}$. This time, our
method did not converge for a $\tau$ equal to 1. This could be explained by
the fact that the images vary a lot and even by translating, some values were still
close to $0$. Moreover, when we repeat the same experiment with the finite
difference implementation, the $\tau$ required jumped to 8 and we had to iterate
about 60 times to get good results. This is yet another argument in favour of the
Fourier transform implementation.

\section{Concluding remarks}\label{sec_conc}

In conclusion, we saw that our algorithm presents a very good way of computing the
numerical solution of the $L^{2}$ optimal transport problem. The Fourier Transform
implementation makes it accurate, fast, stable and thus very efficient. In the
context of image registration, the limitation to densities bounded away from 0 and
to periodic boundary conditions did not seem to be a serious shortcoming for applying
this algorithm to important practical examples. We also saw that even if our method has 
the downside of having to choose a value of $\tau$ without giving too much information 
on how to make this choice a priori, when using the Fourier transform implementation, in all 
our experiments we never had to take a $\tau$ bigger than $2$ to get convergence. We also conducted more
numerical experiments in \cite{Sa}, such as taking the initial and final densities to be the periodic approximation of
gaussian distributions, and the performances were as good as the ones presented here. The interested reader
can consult \cite{Sa} for more details. Furthermore, compared to available numerical methods, our algorithm is very efficient. 
Indeed, Benamou and Brenier's \cite{BB} use of the fluid dynamics reformulation of optimal
transport introduces an extra time variable to the problem which is a non-necessary cost
for all practical purposes of interest to us. In \cite{CW}, Chartrand et al. presented a gradient
descent on the dual problem of the optimal transport problem which produces acceptable results.
However, when they applied their method to the Lena-Tiffany example, numerical
artifacts appeared as they iterated and their method did not fully converge, as
opposed to ours. Finally, Haber et al. introduced a projection algorithm on the mass
preservation constraint in \cite{HRT} which is as of now, probably the best method to
solve the optimal transport problem. Just like the method we developed here, it enjoys
efficiency and stability properties. However our method is
guaranteed to converge in theory (before discretization), thanks to Theorem
\ref{convproof}. This is not necessarily guaranteed in their case.

To pursue this work in the future, the authors would like to extend the method to
different types of boundary conditions. However, for this to happen, we would
require global a priori estimates on the H\"{o}lder norm of the solution of the
Monge-Amp\`{e}re equation, which to the best of our knowledge are not yet available.
Moreover, it would be interesting to implement a parallel version to increase
the performances even more.

\section*{Acknowledgments}
This research is partially supported by Discovery grants and fellowships from the
Natural Sciences and Engineering Research Council of Canada, and by the University
of Victoria.

%% The Appendices part is started with the command \appendix;
%% appendix sections are then done as normal sections
%% \appendix

%% \section{}
%% \label{}

%% References
%%
%% Following citation commands can be used in the body text:
%% Usage of \cite is as follows:
%%   \cite{key}         ==>>  [#]
%%   \cite[chap. 2]{key} ==>> [#, chap. 2]
%%

%% References with bibTeX database:

%\bibliographystyle{elsarticle-num}
%\bibliography{<your-bib-database>}

%% Authors are advised to submit their bibtex database files. They are
%% requested to list a bibtex style file in the manuscript if they do
%% not want to use elsarticle-num.bst.

%% References without bibTeX database:

% \begin{thebibliography}{00}

%% \bibitem must have the following form:
%%   \bibitem{key}...
%%

% \bibitem{}

% \end{thebibliography}

\end{document}